\newtheorem{theorem}{Theorem}[section]
\newtheorem*{theorem A}{Theorem A}
\newtheorem*{theorem B}{N\"olker's Theorem}
\newtheorem{lemma}{Lemma}[section]
\newtheorem{proposition}{Proposition}[section]
\newtheorem{corollary}{Corollary}[section]
\theoremstyle{remark}
\newtheorem{remark}{Remark}[section]
\theoremstyle{remark}
\theoremstyle{definition}
\numberwithin{equation}{section}
\def\({\left ( }
\def\){\right )}
\def\<{\left < }
\def\>{\right >}
\begin{document}

\title{ Quasi-Einstein hypersurfaces of complex space forms  }

\author{Xiaomin Chen}
%    Address of record for the research reported here
\address{College of  Science, China University of Petroleum-Beijing, Beijing, 102249, China}
\email{xmchen@cup.edu.cn}

\thanks{The author is supported by Natural Science Foundation of Beijing, China (Grant No.1194025).}

%SRFDP of China (Grant No. 20050027025).

\begin{abstract}
Based on a well-known fact that there are no Einstein hypersurfaces in a non-flat complex space form, in this article we study the quasi-Einstein condition,  which is a generalization of an Einstein metric, on the real hyersurface of a non-flat complex space form. For the real hypersurface with quasi-Einstein metric of a complex Euclidean space, we also give a classification. Since a gradient Ricci soliton is a special quasi-Einstein metric, our results improve some conclusions of \cite{CK}.
\end{abstract}

\keywords{ quasi-Einstein metric; Hopf hypersurface; ruled hypersurface; non-flat complex space form; complex Euclidean space.}

\subjclass[2000]{53C21; 53C15}
\maketitle

\section{Introduction}

Denote by $\widetilde{M}^n$  the complex space form, i.e. a complex $n$-dimensional K\"ahler manifold with constant holomorphic sectional
curvature $c$. A complete and simple connected complex space form  is complex analytically isometric to a complex projective space
 $\mathbb{C}P^n$ if $c>0$, a complex hyperbolic space $\mathbb{C}H^n$ if $c<0$, a complex Euclidean space $\mathbb{C}^n$ if $c=0.$
The complex projective and complex hyperbolic spaces are called \emph{non-flat complex space forms} and denoted by $\widetilde{M}^n(c)$.
Let $M$ be a real hypersurface of a complex space form. In particular, if $\xi$ is an eigenvector of shape operator $A$ then $M$ is
called a \emph{Hopf hypersurface}. Since there are no
Einstein real hypersurfaces in $\widetilde{M}^n(c)$ (\cite{CR,M}), a natural question is whether there is a generalization of an Einstein metric in the real hyersurface of $\widetilde{M}^n(c)$.    A Ricci soliton is a Riemannian metric, which satisfies
\begin{equation*}
 \frac{1}{2}\mathcal{L}_V g+Ric-\lambda g=0,
\end{equation*}
where $V$ and $\lambda$ are the potential vector field and some constant, respectively. It is clear that a trivial Ricci soliton is an Einstein metric with $V$ zero or Killing. When the potential vector field $V$ is a gradient vector field, i.e. $V=\nabla f$, where $f$ is a smooth function, then it is called a \emph{gradient Ricci soliton.}
 Cho and Kimura \cite{CK,CK2} proved that a Hopf hypersurface and a non-Hopf hypersurface in a non-flat complex space form do not admit a gradient Ricci soliton. Moreover, this is true when the gradient Ricci soliton is repalced by a compact Ricci soliton due to Perelman's result (\cite[Remark 3.2]{P}).

As another interesting generalization of an Einstein metric, a quasi-Einstein metric has been considered (see \cite{CSW,C}). We call a triple $(M, g, f, m)$ (a Riemannian manifold $(M, g)$ with a function
$f$ on $M$) ($m$-)\emph{quasi-Einstein} if it satisfies the equation
\begin{equation}\label{1}
{\rm Ric}+{\rm Hess} f-\frac{1}{m}df\otimes df = \lambda g
\end{equation}
for some $\lambda\in\mathbb{R}$, where $m$ is a positive integer. ${\rm Hess} f$ denotes the Hessian of $f$.
 Notice that Equation \eqref{1} recovers the gradient Ricci soliton when $m=\infty$. A quasi-Einstein metric is an Einstein metric if $f$ is constant. We call a quasi-Einstein
metric \emph{shrinking, steady or expanding}, respectively, when $\lambda<0, \lambda=0$ or $\lambda>0$.
For a general manifold,  quasi-Einstein metrics have been studied in depth and some rigid properties and gap results were obtained (cf.\cite{CSW,W,W2}).
 On the other hand, we also notice that for the odd-dimensional manifold, Ghosh in \cite{G} studied quasi-Einstein contact metric manifolds.
As is well known that a real hypersurface of $\widetilde{M}^n(c)$ is a $(2n-1)$-dimensional almost contact manifold and a gradient Ricci soliton is just a special quasi-Einstein metric with $m=\infty$. From this observation we are inspired to improve the results of \cite{CK} and study the quasi-Einstein condition for the real hypersurface of a complex space form.

In this article, we first study the quasi-Einstein metric on Hopf hypersurfaces in complex space forms as well as a class of non-Hopf hypersurfaces in non-flat complex space forms.
\begin{theorem}\label{T1}
There are no quasi-Einstein Hopf real hypersurfaces in a non-flat complex space form.
\end{theorem}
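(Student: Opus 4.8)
The plan is to substitute the classical formula for the Ricci operator of a real hypersurface into \eqref{1}, reduce the quasi-Einstein condition to the assertion that $M$ is Einstein, and then invoke the nonexistence of Einstein real hypersurfaces in $\widetilde M^n(c)$ (\cite{CR,M}). Let $(\phi,\xi,\eta,g)$ be the induced almost contact metric structure, $A$ the shape operator, $h=\operatorname{tr}A$, $\alpha=g(A\xi,\xi)$ and $\mathbb D=\ker\eta$. Since $M$ is Hopf, $A\xi=\alpha\xi$ with $\alpha$ a (local) constant, $\nabla_X\xi=\phi AX$, $\nabla_\xi\xi=0$, and for $X\in\mathbb D$ with $AX=\mu X$ one has $(2\mu-\alpha)A\phi X=(\alpha\mu+\tfrac c2)\phi X$. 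The Ricci operator is
\[
S=\frac c4\bigl[(2n+1)I-3\,\eta\otimes\xi\bigr]+hA-A^{2},
\]
so $S\xi=\rho\,\xi$ with $\rho=\tfrac{c(n-1)}2+h\alpha-\alpha^{2}$, while $S$ preserves $\mathbb D$ and $S|_{\mathbb D}=\tfrac{c(2n+1)}4 I+(hA-A^{2})|_{\mathbb D}$ is a polynomial in $A|_{\mathbb D}$, hence diagonal in every $A$-eigenbasis of $\mathbb D$. Regarding $\operatorname{Hess}f$ and $df\otimes df$ as $(1,1)$-tensors and putting $u=e^{-f/m}>0$, equation \eqref{1} becomes the cleaner identity
\[
m\,\nabla\nabla u=u\,(S-\lambda I).
\]

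Next I would determine the behaviour of $u$ along $\xi$. Pairing $m\,\nabla\nabla u=u(S-\lambda I)$ with $\xi$ and using $\nabla_\xi\xi=0$ gives $\xi(\xi u)=\tfrac um(\rho-\lambda)$, and pairing with $X\in\mathbb D$ (using $\nabla_X\xi=\phi AX$ and $S\xi\parallel\xi$) gives $X(\xi u)=g(\nabla u,\phi AX)$. Write $\nabla u=U+(\xi u)\xi$ with $U\in\mathbb D$; the crucial step is to prove $U\equiv0$. For this I would substitute $\nabla_Z\nabla u=\tfrac um(S-\lambda I)Z$ into the Ricci identity
\[
R(X,Y)\nabla u=\tfrac1m\bigl[(Xu)(S-\lambda I)Y-(Yu)(S-\lambda I)X\bigr]+\tfrac um\bigl[(\nabla_X S)Y-(\nabla_Y S)X\bigr],
\]
compute the right side from the Gauss equation and from the value of $(\nabla_X S)Y-(\nabla_Y S)X$ obtained by differentiating the $S$-formula and applying the Codazzi equation $(\nabla_X A)Y-(\nabla_Y A)X=\tfrac c4\bigl(\eta(X)\phi Y-\eta(Y)\phi X-2g(\phi X,Y)\xi\bigr)$, and then specialize to $X=\xi$ and $Y\in\mathbb D$. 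Because $\alpha$ is constant, the $\xi$- and $\mathbb D$-components of the resulting identities should close up into a system forcing $U\equiv0$, i.e. $\nabla u=(\xi u)\xi$ (equivalently $\nabla f=(\xi f)\xi$); the exceptional eigenvalue $2\mu=\alpha$ and the locus where $A|_{\mathbb D}$ has nonconstant eigenvalue data would be dealt with separately.

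Once $\nabla u=(\xi u)\xi$, one gets $\nabla_X\nabla u=(X(\xi u))\xi+(\xi u)\phi AX$, so restricting $m\,\nabla\nabla u=u(S-\lambda I)$ to $\mathbb D\times\mathbb D$ gives
\[
m\,(\xi u)\,g(\phi AX,Y)=u\,g\bigl((S-\lambda I)X,Y\bigr),\qquad X,Y\in\mathbb D .
\]
The right side is symmetric, so its skew part vanishes and hence $(\xi u)(\phi A+A\phi)|_{\mathbb D}=0$. If $\xi u\ne0$ on an open set, then $A\phi=-\phi A$ there, which by the Hopf relation (including the case $2\mu=\alpha$) forces $\mu^{2}=-c/4$ for every eigenvalue of $A|_{\mathbb D}$, so $\mu\ne0$ because $c\ne0$ and $\phi$ interchanges the $\mu$- and $(-\mu)$-eigenspaces of $A|_{\mathbb D}$; thus $\phi A|_{\mathbb D}$ has zero diagonal in an $A$-eigenbasis while $S|_{\mathbb D}$ is diagonal, and comparing entries in $m(\xi u)\,\phi A|_{\mathbb D}=u\,(S-\lambda I)|_{\mathbb D}$ yields $m(\xi u)\mu=0$, a contradiction. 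Hence $\xi u\equiv0$, so $u$ and therefore $f$ is constant, and \eqref{1} now says $M$ is Einstein, contradicting \cite{CR,M}. The main obstacle is the step showing $U\equiv0$: squeezing it out of the differentiated equation requires careful bookkeeping of the Gauss and Codazzi terms and a separate treatment of the degenerate eigenvalue $2\mu=\alpha$, after which the argument is essentially the classical nonexistence proof for Einstein hypersurfaces.
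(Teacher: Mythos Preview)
Your substitution $u=e^{-f/m}$ and the endgame (from $\nabla u=(\xi u)\xi$ to $(\xi u)(\phi A+A\phi)|_{\mathbb D}=0$ to $f$ constant) are fine and essentially coincide with how the paper disposes of one subcase. The genuine gap is precisely the step you yourself flag as the ``main obstacle'': the claim that the Ricci--identity computation, specialised to $X=\xi$ and $Y\in\mathbb D$, will force $U\equiv0$. It does not, and the paper's proof shows why.

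Carrying out exactly the calculation you describe (differentiate the Ricci formula, apply Codazzi, take $Y=\xi$, use that $\alpha$ is constant) one obtains, after pairing with $\xi$ and simplifying, the single relation
\[
\theta\bigl(\nabla f-\xi(f)\xi\bigr)+\alpha\bigl(\alpha\,\xi(f)\xi-A\nabla f\bigr)=0,
\qquad
\theta:=-\tfrac1m\Bigl[\tfrac{c(n-1)}{2}+h\alpha-\alpha^{2}\Bigr]-\tfrac{c}{4}+\tfrac{\lambda}{m},
\]
which on a principal $X\in\mathbb D$ with $AX=\mu X$ yields only $(\alpha\mu-\theta)\,X(f)=0$. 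When $\alpha\neq0$ and $\mu=\theta/\alpha$ happens to be a principal curvature, the $\mu$--eigenspace component of $\nabla f$ is completely unconstrained by this identity, so you cannot conclude $U\equiv0$; the ``careful bookkeeping'' you anticipate simply reproduces this vacuous equation. In the paper this is precisely the residual branch: one first extracts from the same computation the purely algebraic identity $\alpha(\phi A^{2}+A^{2}\phi)=(\alpha^{2}+c)(\phi A+A\phi)+(h-\tfrac{\alpha}{2})c\,\phi$, uses it (together with the Hopf relation) to force $h$ constant and to see that $\mu=\theta/\alpha$ is then the \emph{only} principal curvature on $\mathbb D$; the contradiction is finally obtained not from $\nabla f\parallel\xi$ at all, but from the traced Bianchi consequence
\[
\tfrac12\nabla S=\tfrac{m-1}{m}\,Q(\nabla f)+\tfrac1m\bigl(S-(2n-2)\lambda\bigr)\nabla f
\]
combined with the explicit values of $h$, $S$ and $2\mu^{2}-2\alpha\mu-\tfrac{c}{2}=0$, which together force $nc=0$. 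Your outline contains no analogue of this second branch. (The case $\alpha=0$ is likewise handled in the paper via the algebraic identity above, which for $\alpha=0$ gives $A\phi+\phi A+h\phi=0$ and hence $c=0$ directly, before any appeal to $\nabla f$.)
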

\begin{theorem}\label{T2}
There are no quasi-Einstein ruled hypersurfaces  in a non-flat complex space form.
\end{theorem}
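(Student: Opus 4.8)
The plan is to exploit the very rigid structure equations of a ruled hypersurface. Recall that a ruled real hypersurface $M$ in $\widetilde M^n(c)$ is foliated by totally geodesic complex hyperplanes $\widetilde M^{n-1}(c)$, and its shape operator takes the model form $A\xi = \alpha\xi + \beta U$, $AU = \beta\xi$, $AZ = 0$ for all $Z$ orthogonal to $\operatorname{span}\{\xi,U\}$, where $U$ is a unit vector in the holomorphic distribution and $\beta$ is nowhere zero (indeed, ruled hypersurfaces are non-Hopf precisely because $\beta\neq 0$). The first step is to write down the Ricci tensor of $M$ using the Gauss equation together with this explicit shape operator; the Ricci operator $S$ will then be expressed in terms of $\alpha$, $\beta$, the structure tensors $\phi$, $\xi$, and the vector $U=\phi\nabla^\top(\text{something})$, giving a low-rank correction to a constant multiple of the identity on the $(2n-3)$-dimensional subbundle orthogonal to $\{\xi,U,\phi U\}$.

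Next I would insert this Ricci operator into the quasi-Einstein equation \eqref{1}, rewritten as $\operatorname{Hess} f = \lambda g - \operatorname{Ric} + \tfrac1m\, df\otimes df$, and extract information by evaluating on well-chosen vector fields. The key observation is that on the large subbundle $\mathcal D' = \{\xi,U,\phi U\}^\perp$ the Ricci operator is $S = (\text{const})\,\mathrm{Id}$, so \eqref{1} forces $\operatorname{Hess} f(Z,Z) = \lambda |Z|^2 - (\text{const})|Z|^2 + \tfrac1m (Zf)^2$ for $Z\in\mathcal D'$. Comparing with the behaviour on the complex span of $\xi$ and $U$ — where the extra curvature terms appear — pins down $\nabla f$. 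The standard move here is to show first that $\nabla f$ must be (a multiple of) $\xi$, or must vanish on $\mathcal D'$, by testing \eqref{1} against pairs of orthonormal vectors in $\mathcal D'$ and using that $\mathcal D'$ is integrable with totally geodesic leaves inside the ruling $\widetilde M^{n-1}(c)$; then one propagates this along the leaves. One then feeds the resulting form of $f$ back into the $\xi$-, $U$-, and $\phi U$-components of \eqref{1}.

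Concretely, after $\nabla f = \rho\,\xi$ (say), I would compute $\operatorname{Hess} f(X,Y) = (X\rho)\eta(Y) + \rho\,g(\nabla_X\xi,Y) = (X\rho)\eta(Y) + \rho\, g(\phi AX,Y)$, and use the explicit $A$ to expand $\phi A$. Plugging this into \eqref{1} and separating components along $\xi$, $U$, $\phi U$, and $\mathcal D'$ yields a system of PDEs in $\alpha$, $\beta$, $\rho$, $\lambda$, $m$. The $\mathcal D'$-diagonal part will force $\rho$ to be essentially constant in the leaf directions and relate it to $\lambda$; the mixed $U$–$\phi U$ and $\xi$–$U$ components will produce equations that, because $\beta\neq 0$ everywhere, cannot be satisfied — this is where the contradiction is harvested, exactly parallel to how the gradient Ricci soliton case collapses in \cite{CK}, but now with the harmless-looking extra term $-\tfrac1m df\otimes df = -\tfrac1m\rho^2\,\eta\otimes\eta$, which only affects the $\xi\xi$-component and therefore cannot rescue the inconsistent off-diagonal equations.

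The main obstacle I anticipate is the first reduction step: showing that $\nabla f$ is forced into the $\xi$-direction (or is tangent only to $\{\xi,U,\phi U\}$). Unlike the Hopf case treated in Theorem~\ref{T1}, here one cannot immediately use $\xi$-parallelism of the structure, and the integrability/second-fundamental-form data of the ruling leaves must be used carefully to kill the $\mathcal D'$-component of $\nabla f$. Once $\nabla f$ is controlled, the remaining contradiction is a finite computation driven entirely by $\beta\neq 0$; the delicate part is organizing the curvature terms so that the $\tfrac1m df\otimes df$ correction is visibly confined to a single component and hence cannot interfere.
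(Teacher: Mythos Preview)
Your outline has a genuine gap at precisely the point you flag as delicate. The reduction to $\nabla f=\rho\,\xi$ does \emph{not} go through, and assuming it leads you to the wrong mechanism of contradiction.

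In the paper's argument one first kills the $\mathcal D'$-component of $\nabla f$ (your $Z(f)=0$), but this is done not by evaluating the Hessian directly: it uses the curvature identity of Lemma~\ref{L1}, i.e.\ equation~\eqref{15}, with $(X,Y)=(Z,\xi)$ and $(X,Y)=(Z,W)$. After that, however, $\nabla f$ is only known to lie in $\operatorname{span}\{\xi,W,\phi W\}$, and the subsequent case split shows that in the generic case (Case~II) one actually has
\[
\nabla f \;=\; -\frac{4m\beta}{3(m+1)}\,\phi W \;+\;\frac{m\,\xi(\beta^2)}{m+1}\,\xi,
\]
so $\nabla f$ carries a nonzero $\phi W$-component. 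Your claim that the $-\tfrac1m\,df\otimes df$ correction ``only affects the $\xi\xi$-component'' is therefore false in this regime, and the off-diagonal inconsistency you are banking on does not appear: the $df\otimes df$ term genuinely interacts with the $(\phi W,\phi W)$-block. The actual contradiction in the paper comes from computing $\operatorname{Hess} f(W,W)$ and $\operatorname{Hess} f(\phi W,\phi W)$ two ways---once via Lemma~\ref{L5} and $\nabla\beta$, once via \eqref{1}---and extracting the numerical constraint $15m^2+22m-9=0$, which has no integer solution $m\ge 1$. This is not a $\beta\neq 0$ obstruction of the type you anticipate.

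Concretely, what your plan is missing: (i) the use of the second-order identity \eqref{15} rather than the raw Hessian equation to constrain the components of $\nabla f$; (ii) the formula $\nabla\beta=(\beta^2+c/4)\phi W+\xi(\beta)\xi$ and the covariant derivatives of $W,\phi W$ from Lemma~\ref{L5}, which are what make the Hessian computable; and (iii) in the degenerate Case~I ($\beta^2=-c/4$ constant) the scalar-curvature identity \eqref{2.13}, which is what forces $W(f)=\phi W(f)=0$ there. Without these ingredients the system you write down after ``$\nabla f=\rho\xi$'' is simply not the correct one, and the argument does not close.
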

\begin{remark}
Since a gradient Ricci soliton is a special quasi-Einstein metric with $m=\infty$, Theorem \ref{T1} and Theorem \ref{T2} improve the results of \cite{CK}.
\end{remark}

Also we consider the real hypersurfaces with a quasi-Einstein metric of complex Euclidean space $\mathbb{C}^n$ as in \cite{CK}.
We first suppose that $M$ is a contact hypersurface of complex Euclidean space $\mathbb{C}^n$, i.e. $\phi A+A\phi=2\sigma\phi$, where $\sigma>0$ is a smooth function.
\begin{theorem}\label{T3}
Let $M^{2n-1}$ be a complete contact hypersurface of complex Euclidean space $\mathbb{C}^n$. If $M$ admits a quasi-Einstein metric, then $M$ is  a sphere $\mathbb{S}^{2n-1}$ or a generalized cylinder $\mathbb{R}^{n}\times\mathbb{S}^{n-1}$.
\end{theorem}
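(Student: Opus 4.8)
The plan is to combine the structure theory of contact hypersurfaces with the quasi-Einstein equation \eqref{1}. First I would record the two standard consequences of the contact condition $\phi A+A\phi=2\sigma\phi$ with $\sigma>0$. Applying it to $\xi$ and using $\phi\xi=0$ gives $\phi A\xi=0$, so $A\xi=\alpha\xi$ with $\alpha=g(A\xi,\xi)$; thus $M$ is automatically Hopf, as in the situation of Theorem \ref{T1} but now with $c=0$. Applying it on the holomorphic distribution $\mathcal D=\ker\eta$, a principal direction $X$ with $AX=\mu X$ produces $A\phi X=(2\sigma-\mu)\phi X$, so $\phi$ interchanges the $\mu$- and $(2\sigma-\mu)$-eigenspaces of $A|_{\mathcal D}$ and the principal curvatures on $\mathcal D$ occur in pairs summing to $2\sigma$ (hence with equal multiplicities). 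Feeding $A\xi=\alpha\xi$ into the Codazzi equation of the hypersurface in the flat ambient $\mathbb C^n$ then yields, modulo terms in $\nabla\alpha$, the relation $A\phi A=\alpha\sigma\,\phi$, which on $\mathcal D$ forces each $\mu$ to satisfy $\mu^2-2\sigma\mu+\alpha\sigma=0$; in particular $A|_{\mathcal D}$ has at most the two eigenvalues $\sigma\pm\sqrt{\sigma^{2}-\alpha\sigma}$, so together with $\alpha$ the hypersurface has at most three distinct principal curvatures.

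Next I would bring in \eqref{1}. Since $\mathbb C^n$ is flat, the Gauss equation gives $\mathrm{Ric}(X,Y)=(\mathrm{tr}\,A)\,g(AX,Y)-g(A^{2}X,Y)$, so \eqref{1} becomes
\[
(\mathrm{tr}\,A)\,g(AX,Y)-g(A^{2}X,Y)+\mathrm{Hess}\,f(X,Y)-\tfrac1m\,(Xf)(Yf)=\lambda\,g(X,Y).
\]
I would evaluate this on $\xi$ and on unit principal vectors of $\mathcal D$, treating $\mathrm{Hess}\,f(X,Y)=g(\nabla_X\nabla f,Y)$ with the structural identities $\nabla_X\xi=\phi AX$ and $(\nabla_X\phi)Y=\eta(Y)AX-g(AX,Y)\xi$ and differentiating the resulting scalar relations along the principal distributions. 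The purpose of this bookkeeping is twofold: to show that $\alpha$ and $\sigma$ are constant, so that $M$ is isoparametric, and to show that the quasi-Einstein condition is incompatible with three genuinely distinct principal curvatures — i.e. that $\alpha$ must coincide with one of the eigenvalues on $\mathcal D$ or that $A|_{\mathcal D}$ is a multiple of the identity. Concretely, comparing \eqref{1} along the $\mu$- and $(2\sigma-\mu)$-eigendirections (exchanged by $\phi$) and along $\xi$ yields linear relations among $\mathrm{tr}\,A$, $\lambda$, the eigenvalues and the first and second derivatives of $f$; once constancy of the eigenvalues is in hand, the classical fact that a hypersurface of $\mathbb R^{2n}$ cannot carry three distinct constant principal curvatures reduces the problem to the cases $A=\sigma\,\mathrm{Id}$ or $A$ having exactly two principal curvatures.

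Finally I would identify the two surviving cases. If $A=\sigma\,\mathrm{Id}$, then $M$ is totally umbilical with $\sigma>0$ constant, so by completeness $M$ is the round sphere $\mathbb S^{2n-1}$. If $M$ has exactly two principal curvatures, then $\alpha$ equals one of $\sigma\pm\sqrt{\sigma^{2}-\alpha\sigma}$; substituting this value into $\mu^{2}-2\sigma\mu+\alpha\sigma=0$ gives $\alpha\in\{0,\sigma\}$, and $\alpha=\sigma$ is discarded because it makes the two eigenvalues on $\mathcal D$ coincide, so $\alpha=0$. Hence the principal curvatures are $0$ and $2\sigma$ with multiplicities $n$ and $n-1$ (the paired multiplicities on $\mathcal D$ plus $\xi$ in the zero-eigenspace), and a complete hypersurface of $\mathbb R^{2n}$ with these two distinct constant principal curvatures is the generalized cylinder $\mathbb R^{n}\times\mathbb S^{n-1}$. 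The main obstacle is the middle step: handling the Hessian-of-$f$ terms so as to establish constancy of $\alpha$ and $\sigma$ and the reduction to at most two principal curvatures without circularity, since a priori $\sigma$ is only assumed smooth and positive rather than constant.
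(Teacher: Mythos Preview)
Your overall strategy is sound, but it diverges from the paper's proof, and the step you single out as ``the main obstacle'' is in fact not an obstacle at all.

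\textbf{Where your worry dissolves.} The constancy of $\alpha$ (and of $\sigma$) does \emph{not} require the quasi-Einstein equation. The paper invokes \cite[Lemma 3.1]{CK}: for a contact hypersurface one has $A\xi=\alpha\xi$ with $\alpha$ constant, purely from the contact condition and Codazzi. Once $\alpha$ is constant, equation \eqref{11} with $c=0$ gives $2A\phi A=\alpha(\phi A+A\phi)=2\alpha\sigma\,\phi$ on the nose (no ``modulo $\nabla\alpha$'' needed), so your quadratic $\mu^{2}-2\sigma\mu+\alpha\sigma=0$ holds exactly and the principal curvatures are constant. Your Cartan/Segre argument (an isoparametric hypersurface of Euclidean space has at most two distinct principal curvatures) then forces $\alpha\in\{\mu_1,\mu_2\}$ or $\mu_1=\mu_2$, and your casework correctly yields the sphere or the cylinder $\mathbb{R}^{n}\times\mathbb{S}^{n-1}$. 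So the plan works once you cite the right lemma; there is no Hessian bookkeeping to do.

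\textbf{How the paper proceeds instead.} The paper does not use Cartan's classification. It feeds $c=0$ into the quasi-Einstein identity of Lemma~2.3 (equation \eqref{13}), obtaining from \eqref{29} the relation $\alpha\mu^{2}(\mu-\alpha)^{2}=0$, and from \eqref{18} with $Y=\xi$ the equation
\[
\Big(\tfrac{\lambda-h\alpha+\alpha^{2}}{m}-\alpha\mu\Big)X(f)=0\qquad(X\in\mathfrak D).
\]
It then splits on whether $\lambda=m\alpha\mu+h\alpha-\alpha^{2}$: if not, $\nabla f=\xi(f)\xi$ and the argument of \cite[Theorem 3.2]{CK} gives a sphere; if so, $\alpha\mu^{2}(\mu-\alpha)^{2}=0$ forces $\alpha=0$ (cylinder) or $\mu=\alpha$ (totally umbilical, sphere), the case $\mu=0$ with $\alpha\neq0$ being excluded because it would make $\sigma=0$.

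\textbf{Comparison.} Your route is more geometric and, once you use the known constancy of $\alpha$, shorter; it also makes transparent that the conclusion is really Okumura's classification of complete contact hypersurfaces of $\mathbb{C}^{n}$ and does not actually use the quasi-Einstein hypothesis. The paper's route stays within the quasi-Einstein framework developed earlier (Lemmas~2.1--2.3), trading the appeal to isoparametric theory for the algebraic identity \eqref{13}.
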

For a general hypersurface of complex Euclidean space $\mathbb{C}^n$, we obtain
\begin{corollary}\label{T4}
Let $M^{2n-1}$ be a complete real hypersurface with $A\xi=0$ of complex Euclidean space $\mathbb{C}^n$. If $M$ admits a non-steady quasi-Einstein metric,  it is a hypersphere, hyperplane or developable hypersurface.
\end{corollary}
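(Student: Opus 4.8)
\emph{Proof idea.} The plan is to run the same circle of ideas used to prove Theorem~\ref{T3}, now exploiting the extra hypothesis $A\xi=0$. Since $\mathbb C^n$ is flat, the Gauss equation reads $\mathrm{Ric}=(\mathrm{tr}\,A)\,A-A^2$, so $A\xi=0$ gives at once $\mathrm{Ric}(\xi,\cdot)=0$ and shows that the holomorphic distribution $\mathcal D=\ker\eta$ of the induced almost contact structure $(\phi,\xi,\eta,g)$ is $A$-invariant. I would then pass from $f$ to $u=e^{-f/m}>0$, which turns the quasi-Einstein equation~\eqref{1} into $m\,\mathrm{Hess}\,u=u\,(\mathrm{Ric}-\lambda g)$, i.e.\ $m\,\nabla_X\nabla u=u\big((\mathrm{tr}\,A)\,AX-A^2X-\lambda X\big)$ for every $X$. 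Evaluating this at $X=\xi$ and using $\mathrm{Ric}(\xi,\cdot)=0$ yields the clean relation $m\,\nabla_\xi\nabla u=-\lambda u\,\xi$.

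The crucial step is to differentiate the operator identity above once more and feed in the structure equations $\nabla_X\xi=\phi AX$ and $(\nabla_X\phi)Y=\eta(Y)AX-g(AX,Y)\xi$, together with the Codazzi equation $(\nabla_XA)Y=(\nabla_YA)X$ (from which, since $A\xi=0$, one also obtains $(\nabla_\xi A)Z=-A\phi AZ$). Comparing the resulting expression for $R(X,Y)\nabla u$ with the value $R(X,Y)\nabla u=g(Y,A\nabla u)AX-g(X,A\nabla u)AY$ forced by the Gauss equation produces an identity relating $\nabla u$, $A$ and $\mathrm{tr}\,A$. Setting $Y=\xi$ and restricting to $X\in\mathcal D$, every term of this identity --- in particular the Codazzi contribution $(\nabla_\xi A)(AX)=-A\phi A^2X$ --- is tangent to $\mathcal D$ except for a term $-\lambda(Xu)\,\xi$; matching the components along $\xi$ therefore leaves only $\lambda(Xu)=0$, and since $\lambda\neq0$ this forces $Xu=0$ for all $X\in\mathcal D$, that is, $\nabla u=(\xi u)\,\xi$. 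I expect this extraction --- organizing all the $\phi A$, $A^2$ and Codazzi terms and checking that nothing but $-\lambda(Xu)\,\xi$ survives along $\xi$ --- to be the main obstacle.

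With $\nabla u=(\xi u)\,\xi$ in hand, substituting back into $m\,\nabla_X\nabla u=u\big((\mathrm{tr}\,A)\,AX-A^2X-\lambda X\big)$ and extracting the $\mathcal D$-component gives the purely algebraic relation $\beta\,\phi A=(\mathrm{tr}\,A)\,A-A^2-\lambda\,\mathrm{Id}$ on $\mathcal D$, with $\beta=m(\xi u)/u$. Since the right-hand side is self-adjoint, either $\beta\equiv0$ or $\phi A+A\phi=0$ on $\mathcal D$. If $\beta\equiv0$, then $u$, hence $f$, is constant, $M$ is an Einstein hypersurface of $\mathbb C^n$, and the classical classification of Einstein Euclidean hypersurfaces --- together with the Hartman--Nirenberg cylinder theorem in the Ricci-flat subcase --- leaves only the hypersphere, the hyperplane and the developable hypersurfaces. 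If instead $\phi A+A\phi=0$ on $\mathcal D$, then the eigenvalues of $A|_{\mathcal D}$ occur in pairs $\pm\mu$ interchanged by $\phi$, so $\mathrm{tr}\,A=0$; feeding this into the algebraic relation then forces each such $\mu$ to vanish, whence $A\equiv0$ and $M$ is flat, i.e.\ again a hyperplane or a developable hypersurface. Assembling the two cases and invoking the non-steady hypothesis $\lambda\neq0$ to eliminate the remaining incompatible configurations then yields the stated trichotomy. One should bear in mind throughout that $u=e^{-f/m}$ is a globally defined positive function, so the Obata/Tashiro-type rigidity behind the model descriptions is applied on the complete manifold $M$ itself, and it is completeness that pins down the ambient model.
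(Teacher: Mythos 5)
Your proposal is essentially correct, and its first half coincides with the paper's argument up to a change of variable: passing to $u=e^{-f/m}$ and contracting the resulting identity for $R(X,Y)\nabla u$ with $\xi$ (using $c=0$ and $A\xi=0$, hence $Q\xi=0$ and $g((\nabla_XQ)\xi,\xi)=g((\nabla_\xi Q)X,\xi)=0$) is exactly the mechanism behind the paper's formula \eqref{3.11}, which for $\lambda\neq0$ gives $\nabla f=\xi(f)\xi$; so this step is the same computation in different clothing. Where you genuinely deviate is in forcing $f$ to be constant: the paper uses the Hessian-symmetry relation \eqref{3.7*}, the identity $A\phi A=0$ obtained from \eqref{11} with $\alpha=c=0$, and then the Case--Shu--Wei identity \eqref{3.10*} to exclude $A=0$, whereas you substitute $\nabla u=\xi(u)\xi$ into the full quasi-Einstein equation and read off the algebraic relation $\beta\,\phi A=hA-A^2-\lambda\,\mathrm{Id}$ on $\mathfrak{D}$ with $\beta=m\,\xi(u)/u$, whose skew part reproduces \eqref{3.7*} and whose symmetric part closes the argument without \eqref{3.10*}; that is a legitimate and somewhat more self-contained finish. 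Two points need tightening, though. First, the dichotomy ``$\beta\equiv0$ or $\phi A+A\phi=0$'' is only pointwise, so you should argue on the open set $\{\beta\neq0\}$. Second, on that set your own relation kills the branch outright rather than producing flat examples: for $X\in\mathfrak{D}$ with $AX=\mu X$ it reads $\beta\mu\,\phi X=(h\mu-\mu^2-\lambda)X$, so the $\phi X$-component gives $\mu=0$ and the $X$-component then gives $\lambda=0$, contradicting non-steadiness; hence $\{\beta\neq0\}=\emptyset$. Your stated conclusion for that branch ($A\equiv0$, hence a hyperplane or developable hypersurface) is not justified as written, since you would only know $A=0$ on an open subset, and it is in any case unnecessary. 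With $\beta\equiv0$, $f$ is constant, $M$ is Einstein, and the classification of Einstein hypersurfaces of Euclidean space (Fialkow's theorem, as the paper cites, or your Hartman--Nirenberg variant) yields the stated list.
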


 In order to prove these conclusions,  we need recall some basic concepts and related results in Section 2. In Section 3 and Section 4, we give respectively the proofs of Theorem \ref{T1} and Theorem \ref{T2}, and the real hypersurface with a quasi-Einstein metric of complex Euclidean spaces is presented in Section 5.

\section{Some basic concepts and related results}

  Let ($\widetilde{M}^n,\widetilde{g})$ be a complex $n$-dimensional K\"ahler manifold
   and $M$ be an immersed, without boundary, real hypersurface of $\widetilde{M}^n$ with the induced metric $g$.
Denote by $J$ the complex structure on $\widetilde{M}^n$. There exists a local defined
unit normal vector field $N$ on $M$ and we write $\xi:=-JN$
by the structure vector field of $M$.
 An induced one-form $\eta$ is defined by
$\eta(\cdot)=\widetilde{g}(J\cdot,N)$, which is dual to $\xi$.  For any vector field $X$ on $M$ the tangent part of $JX$
is denoted by $\phi X=JX-\eta(X)N$. Moreover, the following identities hold:
\begin{equation}\label{2}
\phi^2=-Id+\eta\otimes\xi,\quad\eta\circ \phi=0,\quad\phi\circ\xi=0,\quad\eta(\xi)=1,
\end{equation}
\begin{equation}\label{3}
g(\phi X,\phi Y)=g(X,Y)-\eta(X)\eta(Y),
\end{equation}
\begin{equation}\label{4}
g(X,\xi)=\eta(X),
\end{equation}
where $X,Y\in\mathfrak{X}(M)$. By \eqref{2}-\eqref{4}, we know that $(\phi,\eta,\xi,g)$ is an almost
contact metric structure on $M$.

Denote by $\nabla, A$ the induced Riemannian connection and the shape operator on $M$, respectively.
Then the Gauss and Weingarten formulas are given by
\begin{equation}\label{5}
\widetilde{\nabla}_XY=\nabla_XY+g(AX,Y)N,\quad\widetilde{\nabla}_XN=-AX,
\end{equation}
 where $\widetilde{\nabla}$ is the connection on $\widetilde{M}^n$ with respect to $\widetilde{g}$.
Also, we have
\begin{equation}\label{6}
  (\nabla_X\phi)Y=\eta(Y)AX-g(AX,Y)\xi,\quad\nabla_X\xi=\phi AX.
\end{equation}
In particular, $M$ is said to be a \emph{Hopf hypersurface} if the structure vector field $\xi$ is an eigenvector of $A$, i.e. $A\xi=\alpha\xi$, where $\alpha=\eta(A\xi).$

From now on we always assume that the holomorphic sectional curvature of $\widetilde{M}^n$ is constant $c$. When $c=0$, $\widetilde{M}^n$ is complex Euclidean space $\mathbb{C}^n$. When $c\neq0$, $\widetilde{M}^n$ is a non-flat complex space form, denoted by $\widetilde{M}^n(c)$, then from \eqref{5}, we know that the curvature tensor $R$ of $M$ is given by
\begin{align}\label{7}
R(X,Y)Z=\frac{c}{4}\Big(&g(Y,Z)X-g(X,Z)Y+g(\phi Y,Z)\phi X-g(\phi X,Z)\phi Y\\
&+2g(X,\phi Y)\phi Z\Big)+g(AY,Z)AX-g(AX,Z)AY\nonumber
\end{align}
and the shape operator $A$ satisfies
\begin{equation}\label{8}
 (\nabla_XA)Y-(\nabla_YA)X=\frac{c}{4}\Big(\eta(X)\phi Y-\eta(Y)\phi X-2g(\phi X,Y)\xi\Big)
\end{equation}
for any vector fields $X,Y,Z$ on $M$.
From \eqref{7}, we get for the Ricci tensor $Q$ of type $(1,1)$:
\begin{equation}\label{9}
 QX=\frac{c}{4}\{(2n+1)X-3\eta(X)\xi\}+hAX-A^2X,
\end{equation}
where $h$ denotes the mean curvature of $M$ (i.e. $h={\rm trace}(A)$). We denote $S$ the scalar curvature of $M$, i.e. $S={\rm trace}(Q).$

Now we suppose $M$ is an Hopf hypersurface. Differentiating $A\xi=\alpha\xi$ covariantly gives
\begin{equation}\label{9*}
  (\nabla_XA)\xi=X(\alpha)\xi+\alpha\phi AX-A\phi AX.
\end{equation}
 Using  \eqref{8}, we obtain
\begin{equation}\label{10}
 (\nabla_\xi A)X=X(\alpha)\xi+\alpha\phi AX-A\phi AX+\frac{c}{4}\phi X
\end{equation}
for any vector field $X$. Since $\nabla_\xi A$ is self-adjoint, by taking the anti-symmetry part of \eqref{10}, we get the relation:
\begin{equation}\label{11}
2A\phi AX-\frac{c}{2}\phi X=X(\alpha)\xi-\eta(X)\nabla\alpha+\alpha(\phi A+A\phi)X.
\end{equation}

As the tangent bundle $TM$ can be decomposed as $TM=\mathbb{R}\xi\oplus\mathfrak{D}$, where $\mathfrak{D}=\{X\in TM:X\bot\xi\}$, the condition $A\xi=\alpha\xi$ implies $A\mathfrak{D}\subset\mathfrak{D}$, thus we can pick up $X\in\mathfrak{D}$ such that $AX=\mu X$ for some function $\mu$ on $M$.
Then from \eqref{11} we obtain
\begin{equation}\label{12}
(2\mu-\alpha)A\phi X=\Big(\mu\alpha+\frac{c}{2}\Big)\phi X.
\end{equation}
If $2\mu=\alpha$ then $c=-4\mu^2,$ which show that $M$ is locally congruent to a horosphere in $\mathbb{C}H^n$(see \cite{B}).

Next we recall two important lemmas for a Riemannian manifold satisfying quasi-Einstein equation \eqref{1}.
\begin{lemma}[\cite{G}]\label{L1}
For a quasi-Einstein metric, the curvature tensor $R$ can be expressed as
\begin{align*}
R(X,Y )\nabla f =&(\nabla_YQ)X-(\nabla_XQ)Y-\frac{\lambda}{m}\{X(f)Y-Y(f)X\}\\
&+\frac{1}{m}\{X(f)QY-Y(f)QX\}
\end{align*}
for any vector fields $X,Y$ on $M$.
\end{lemma}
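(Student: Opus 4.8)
The plan is to convert the quasi-Einstein equation \eqref{1} into an operator identity for $\nabla f$ and then to compute the curvature operator directly from it. Writing ${\rm Ric}(X,Y)=g(QX,Y)$, ${\rm Hess}\,f(X,Y)=g(\nabla_X\nabla f,Y)$, and $(df\otimes df)(X,Y)=X(f)\,Y(f)$, equation \eqref{1} is equivalent to
\begin{equation*}
\nabla_X\nabla f=\lambda X-QX+\frac{1}{m}X(f)\,\nabla f
\end{equation*}
for every vector field $X$ on $M$. This single identity carries all the information of \eqref{1}; everything else is a bracket computation.

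Next I would compute $\nabla_X\nabla_Y\nabla f$ by applying $\nabla_X$ to the displayed identity with $X$ replaced by $Y$. Using $\nabla_X(QY)=(\nabla_XQ)Y+Q(\nabla_XY)$ together with the Leibniz rule $\nabla_X\big(Y(f)\,\nabla f\big)=X(Y(f))\,\nabla f+Y(f)\,\nabla_X\nabla f$, and then substituting the displayed identity a second time for the reappearing term $\nabla_X\nabla f$, one obtains an expression for $\nabla_X\nabla_Y\nabla f$ entirely in terms of $\lambda$, $Q$, $\nabla_XY$, $\nabla f$, and the first and second derivatives of $f$. Writing the analogous expression for $\nabla_Y\nabla_X\nabla f$, and reading $\nabla_{[X,Y]}\nabla f$ off the displayed identity directly, I then assemble $R(X,Y)\nabla f=\nabla_X\nabla_Y\nabla f-\nabla_Y\nabla_X\nabla f-\nabla_{[X,Y]}\nabla f$.

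The concluding step is to spot the cancellations. The terms linear in $\nabla_XY$, $\nabla_YX$ and $[X,Y]$ group so that the $\lambda$-part and the $Q$-part each multiply the torsion expression $\nabla_XY-\nabla_YX-[X,Y]=0$; the $\nabla f$-terms carrying second derivatives of $f$ cancel because $X(Y(f))-Y(X(f))-[X,Y](f)=0$; and the terms quadratic in $df$, proportional to $X(f)\,Y(f)\,\nabla f/m^{2}$, are symmetric in $X$ and $Y$ and drop out. What survives is exactly $(\nabla_YQ)X-(\nabla_XQ)Y$ together with $\frac{1}{m}\big\{Y(f)(\lambda X-QX)-X(f)(\lambda Y-QY)\big\}$, and rearranging the latter into $-\frac{\lambda}{m}\{X(f)Y-Y(f)X\}+\frac{1}{m}\{X(f)QY-Y(f)QX\}$ yields the claimed formula. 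There is no genuine obstacle here; the only points requiring care are to substitute the operator identity a second time so that no $\nabla_X\nabla f$ remains implicit, and to keep the $\frac{1}{m}$ and $\frac{1}{m^{2}}$ terms apart so that the quadratic pieces are seen to cancel.
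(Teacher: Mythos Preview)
Your proof is correct: the operator reformulation of \eqref{1} and the subsequent curvature computation go through exactly as you describe, with all the cancellations occurring as claimed. The paper itself does not supply a proof of this lemma but merely cites \cite{G}; your argument is the standard direct derivation and would be the expected proof in that reference as well.
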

\begin{lemma}[\cite{CSW}]
For a quasi-Einstein $(M^{2n-1}, g, f, m)$, the following equations hold:
\begin{align}
\frac{1}{2}\nabla S =& \frac{m-1}{m}Q(\nabla f)+\frac{1}{m}\Big(S-(2n-2)\lambda\Big)\nabla f\label{2.13},\\
  \frac{1}{2}\Delta S-\frac{m+2}{2m}g(\nabla f,\nabla S)=& -\frac{m-1}{m}\Big|{\rm Ric}-\frac{S}{2n-1}g\Big|^2\label{3.10*}\\
  &-\frac{m+2n-2}{m(2n-1)}\Big(S-(2n-1)\lambda\Big)\Big(S-\frac{(2n-2)(2n-1)}{m+2n-2}\lambda\Big).\nonumber
\end{align}
\end{lemma}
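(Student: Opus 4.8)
The plan is to derive both identities by successively differentiating the quasi-Einstein equation \eqref{1} and invoking only the contracted second Bianchi identity together with the Bochner-type commutation formula for a function. Write $N=2n-1$ and regard \eqref{1} in the operator form $Q+\nabla^2 f-\tfrac1m\,df\otimes df=\lambda\,\mathrm{Id}$, where $\nabla^2 f$ denotes the Hessian operator $X\mapsto\nabla_X\nabla f$. Two elementary consequences will be used throughout: tracing \eqref{1} gives
\begin{equation*}
\Delta f=N\lambda-S+\tfrac1m|\nabla f|^2,
\end{equation*}
and applying the operator form of \eqref{1} to $\nabla f$ gives
\begin{equation*}
\tfrac12\nabla|\nabla f|^2=\nabla_{\nabla f}\nabla f=\lambda\nabla f-Q\nabla f+\tfrac1m|\nabla f|^2\nabla f.
\end{equation*}
These are the only tools needed to eliminate the auxiliary quantities $\Delta f$ and $|\nabla f|^2$.

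For \eqref{2.13}, I would take the divergence of \eqref{1}, using $\mathrm{div}\,\mathrm{Ric}=\tfrac12\,dS$, $\mathrm{div}(\mathrm{Hess}\,f)=d(\Delta f)+\mathrm{Ric}(\nabla f,\cdot)$, and $\mathrm{div}(df\otimes df)=(\Delta f)\,df+\tfrac12\,d|\nabla f|^2$. Passing to gradients, this reads
\begin{equation*}
\tfrac12\nabla S+\nabla(\Delta f)+Q\nabla f-\tfrac1m(\Delta f)\nabla f-\tfrac1{2m}\nabla|\nabla f|^2=0.
\end{equation*}
I would then substitute, in order, $\nabla(\Delta f)=-\nabla S+\tfrac1m\nabla|\nabla f|^2$, then the expression above for $\tfrac12\nabla|\nabla f|^2$, then the expression for $\Delta f$. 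A short computation shows that every term containing $|\nabla f|^2$ cancels, and the surviving identity is exactly $\tfrac12\nabla S=\tfrac{m-1}{m}Q(\nabla f)+\tfrac1m\big(S-(2n-2)\lambda\big)\nabla f$, which is \eqref{2.13}.

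For \eqref{3.10*}, the plan is to take the divergence of \eqref{2.13}. Here $\mathrm{div}(Q\nabla f)=g(\mathrm{div}\,Q,\nabla f)+\langle Q,\nabla^2 f\rangle=\tfrac12 g(\nabla S,\nabla f)+\langle Q,\nabla^2 f\rangle$, and substituting $\nabla^2 f=\lambda\,\mathrm{Id}-Q+\tfrac1m\,df\otimes df$ turns the tensor contraction into $\lambda S-|Q|^2+\tfrac1m Q(\nabla f,\nabla f)$; combined with $\mathrm{div}\big((S-(2n-2)\lambda)\nabla f\big)=g(\nabla S,\nabla f)+(S-(2n-2)\lambda)\Delta f$ one gets a formula for $\tfrac12\Delta S$ involving $g(\nabla S,\nabla f)$, $Q(\nabla f,\nabla f)$, $|Q|^2$, $\Delta f$ and $|\nabla f|^2$. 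To close it, take the inner product of \eqref{2.13} with $\nabla f$ to write $Q(\nabla f,\nabla f)$ in terms of $g(\nabla S,\nabla f)$ and $|\nabla f|^2$, and use the trace relation once more for $\Delta f$. The coefficient $\tfrac{m+2}{2m}$ in the statement is precisely the one that makes all $g(\nabla S,\nabla f)$ terms collect onto the left, after which the $|\nabla f|^2$ terms cancel against one another. Finally, writing $|Q|^2=\big|\mathrm{Ric}-\tfrac{S}{2n-1}g\big|^2+\tfrac{S^2}{2n-1}$ and collecting the remaining pure scalar terms, the polynomial in $S$ and $\lambda$ must be shown to factor as $-\tfrac{m+2n-2}{m(2n-1)}\big(S-(2n-1)\lambda\big)\big(S-\tfrac{(2n-2)(2n-1)}{m+2n-2}\lambda\big)$, giving \eqref{3.10*}.

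I expect the bookkeeping in the second identity to be the main obstacle: one must carry six or seven distinct scalar quantities through the substitutions at once, and it is the cancellation of the $g(\nabla S,\nabla f)$ and $|\nabla f|^2$ contributions together with the exact factorization of the residual quadratic in $S$ that pin down the coefficients $\tfrac{m+2}{2m}$, $-\tfrac{m+2n-2}{m(2n-1)}$ and the root $\tfrac{(2n-2)(2n-1)}{m+2n-2}\lambda$; matching the sum and product of the two claimed roots against the collected coefficients is the cleanest way to verify this. The only delicate conceptual point is fixing the sign conventions in $\mathrm{div}(\mathrm{Hess}\,f)=d(\Delta f)+\mathrm{Ric}(\nabla f,\cdot)$ so that it is compatible with the curvature and Ricci conventions of \eqref{7}-\eqref{9}; once that is settled, the rest is a routine if lengthy calculation.
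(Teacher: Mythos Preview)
The paper does not supply its own proof of this lemma; it is quoted verbatim from \cite{CSW} and used as a black box. Your outline is correct and is essentially the argument given in that reference: take the divergence of the quasi-Einstein equation and combine the contracted Bianchi identity with the Bochner commutation $\mathrm{div}(\mathrm{Hess}\,f)=d(\Delta f)+\mathrm{Ric}(\nabla f,\cdot)$ to obtain \eqref{2.13}, then take the divergence of \eqref{2.13} and close the resulting expression using the trace relation and \eqref{2.13} itself to reach \eqref{3.10*}. There is nothing to compare against in the present paper, and no gap in your proposal.
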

Applying Lemma \ref{L1} we obtain
\begin{lemma}
For a quasi-Einstein Hopf real hypersurface $M^{2n-1}$ of a complex space form $\widetilde{M}^n$, the following equation holds:
\begin{align}\label{13}
  \alpha(\phi A^2 +A^2\phi)=(\alpha^2+c)(A\phi+\phi A)+(h-\frac{\alpha}{2})c\phi.
\end{align}
\end{lemma}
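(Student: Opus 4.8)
The plan is to contract Ghosh's curvature identity (Lemma~\ref{L1}) against $\xi$, restricted to the distribution $\mathfrak{D}$; there the terms involving the potential function disappear, and what remains is a purely extrinsic relation that \eqref{11} converts into \eqref{13}. First I would record the Hopf form of \eqref{7} at $\xi$: since $\phi\xi=0$, $g(\phi\,\cdot\,,\xi)=0$ and $A\xi=\alpha\xi$,
\[
R(X,Y)\xi=\frac{c}{4}\big(\eta(Y)X-\eta(X)Y\big)+\alpha\big(\eta(Y)AX-\eta(X)AY\big),
\]
so $R(X,Y)\xi=0$ when $X,Y\in\mathfrak{D}$, whence $g(R(X,Y)\nabla f,\xi)=-g(R(X,Y)\xi,\nabla f)=0$ for such $X,Y$. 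On the other hand, pairing Lemma~\ref{L1} with $\xi$ and using that \eqref{9} makes $Q\xi$ a multiple of $\xi$ (so $g(QZ,\xi)$ is a multiple of $\eta(Z)$), every term of Lemma~\ref{L1} carrying a factor $X(f)$ or $Y(f)$ is multiplied by $\eta(X)$ or $\eta(Y)$; all of them therefore vanish for $X,Y\in\mathfrak{D}$, leaving
\[
g\big((\nabla_YQ)X-(\nabla_XQ)Y,\xi\big)=0,\qquad X,Y\in\mathfrak{D}.
\]
This is the only point at which the quasi-Einstein hypothesis is invoked.

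Next I would make this identity explicit. Differentiating \eqref{9}, using $\nabla_Y\xi=\phi AY$ from \eqref{6}, the self-adjointness of $\nabla_YA$ and of $\nabla_YA^2$, $(\nabla_YA)\xi$ from \eqref{9*}, and $(\nabla_YA^2)\xi=2\alpha Y(\alpha)\xi+\alpha^2\phi AY-A^2\phi AY$, and discarding the terms proportional to $\eta(X)$, one obtains
\[
g\big((\nabla_YQ)X,\xi\big)=\Big(h\alpha-\alpha^2-\frac{3c}{4}\Big)g(\phi AY,X)-h\,g(A\phi AY,X)+g(A^2\phi AY,X)
\]
for $X,Y\in\mathfrak{D}$. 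Antisymmetrising in $X,Y$ and using that $\phi$ is skew-symmetric and $A$ symmetric to transpose the operators in $g(\cdot,\cdot)$, the vanishing identity above becomes the operator equation on $\mathfrak{D}$
\[
A\phi A^2+A^2\phi A-2hA\phi A=\Big(\frac{3c}{4}-h\alpha+\alpha^2\Big)(A\phi+\phi A).
\]

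Finally I would eliminate the cubic terms. Since $\phi$ takes values in $\mathfrak{D}$ and $A\mathfrak{D}\subset\mathfrak{D}$, the operator $A\phi A$ maps $\mathfrak{D}$ into $\mathfrak{D}$, so the $\mathfrak{D}$-component of \eqref{11} reads $A\phi A=\frac{c}{4}\phi+\frac{\alpha}{2}(\phi A+A\phi)$ on $\mathfrak{D}$; substituting this into $A\phi A^2+A^2\phi A=(A\phi A)A+A(A\phi A)$ and into $2hA\phi A$ and collecting terms collapses the previous display to precisely \eqref{13} on $\mathfrak{D}$. As $A\xi=\alpha\xi$ and $\phi\xi=0$ force both sides of \eqref{13} to annihilate $\xi$, the identity holds on all of $TM=\mathbb{R}\xi\oplus\mathfrak{D}$, completing the proof.

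The main obstacle is computational: differentiating \eqref{9} correctly and tracking the adjoints in the second step, and then carrying out the cancellation that reduces the cubic operator equation to \eqref{13}. The one conceptual point is recognising that restricting to $\mathfrak{D}$ and projecting onto $\xi$ is exactly what removes the $\nabla f$-dependence and leaves a statement about the shape operator alone.
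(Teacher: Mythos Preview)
Your argument is correct and follows the same overall strategy as the paper: pair Lemma~\ref{L1} with $\xi$, exploit the Hopf condition through \eqref{9*}, and then invoke \eqref{11} to collapse the result to \eqref{13}. The organisational difference is that you restrict to $X,Y\in\mathfrak{D}$ at the outset, which immediately kills all the $\eta$- and $f$-terms and lets you pass straight to a clean operator identity in $A,\phi$; the paper instead keeps $X,Y$ arbitrary, carries the $\eta(X),\eta(Y),X(f),Y(f)$ terms through to the intermediate equation \eqref{18}, and only projects onto $\mathfrak{D}$ at the very end by substituting $\phi X,\phi Y$. Your route is shorter and more transparent for the lemma itself, but note that the paper's detour through \eqref{18} is not wasted: setting $Y=\xi$ in \eqref{18} yields \eqref{3.10}, which is the workhorse relation for the remainder of the proof of Theorem~\ref{T1}. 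So if you are writing up the full theorem, you will eventually need the information that your early restriction to $\mathfrak{D}$ discards.
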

\begin{proof}
 Replacing $Z$ in \eqref{7} by $\nabla f$, we have
 \begin{align*}
R(X,Y)\nabla f=\frac{c}{4}\Big(&Y(f) X-X(f)Y+\phi Y(f)\phi X-\phi X(f)\phi Y\\
&+2g(X,\phi Y)\phi\nabla f\Big)+AY(f)AX-AX(f)AY.\nonumber
\end{align*}
By  Lemma \ref{L1}, we get
\begin{align}\label{15}
  &(\nabla_YQ)X-(\nabla_XQ)Y+\frac{1}{m}\{X(f)QY-Y(f)QX\} \\
 = &\Big(\frac{c}{4}-\frac{\lambda}{m}\Big)\Big(Y(f) X-X(f)Y\Big)+\frac{c}{4}\Big(\phi Y(f)\phi X-\phi X(f)\phi Y\nonumber\\
&+2g(X,\phi Y)\phi\nabla f\Big)+AY(f)AX-AX(f)AY.\nonumber
\end{align}

Now making use of \eqref{9}, for any vector fields $X,Y$ we first compute
\begin{align*}
  (\nabla_YQ)X=&\frac{c}{4}\{-3(\nabla_Y\eta)(X)\xi-3\eta(X)\nabla_Y\xi\}+Y(h)AX+h(\nabla_YA)X\\
  &-(\nabla_YA)AX-A(\nabla_YA)X\\
  =&-\frac{3c}{4}\{g(\phi AY,X)\xi+\eta(X)\phi AY\}+Y(h)AX+h(\nabla_YA)X\\
  &-(\nabla_YA)AX-A(\nabla_YA)X.
\end{align*}
By \eqref{8}, we thus obtain
\begin{align}\label{16}
  &(\nabla_XQ)Y-(\nabla_Y Q)X \\
   =&-\frac{3c}{4}\{g(\phi AX+A\phi X,Y)\xi+\eta(Y)\phi AX-\eta(X)\phi AY\}\nonumber\\
   &+X(h)AY-Y(h)AX+\frac{hc}{4}\Big(\eta(X)\phi Y-\eta(Y)\phi X-2g(\phi X,Y)\xi\Big)\nonumber\\
   &-(\nabla_XA)AY+(\nabla_YA)AX-\frac{c}{4}\Big(\eta(X)A\phi Y-\eta(Y)A\phi X-2g(\phi X,Y)A\xi\Big).\nonumber
\end{align}

Since $M$ is Hopf, i.e. $A\xi=\alpha\xi$, taking the product of \eqref{15} with $\xi$ and using \eqref{16}, we conclude that
\begin{align}\label{17}
  &-\frac{1}{m}\{X(f)\eta(QY)-Y(f)\eta(QX)\}+\Big(\frac{c}{4}-\frac{\lambda}{m}\Big)\Big(Y(f)\eta(X)-X(f)\eta(Y)\Big)\\
  &+\alpha\Big(AY(f)\eta(X)-AX(f)\eta(Y)\Big)-\frac{3c}{4}g(\phi AX+A\phi X,Y)\nonumber\\
   &+\alpha\Big(X(h)\eta(Y)-Y(h)\eta(X)\Big)-\frac{h-\alpha}{2}cg(\phi X,Y)\nonumber\\
   &-g((\nabla_XA)AY-(\nabla_YA)AX,\xi)=0.\nonumber
\end{align}
 Moreover, using \eqref{9*} we compute
\begin{align*}
  &g((\nabla_XA)AY-(\nabla_YA)AX,\xi) \\
   =&g(X(\alpha)\xi+\alpha\phi AX-A\phi A X,AY) -g(Y(\alpha)\xi+\alpha\phi AY-A\phi AY,AX)\\
   =&\alpha[X(\alpha)\eta(Y)-Y(\alpha)\eta(X)]+2\alpha g(\phi AX,AY)-g(A\phi AX,AY)+g(A\phi AY,AX).
\end{align*}
Substituting this into \eqref{17} and using \eqref{9}, we arrive at
\begin{align*}
 &\Big(-\frac{1}{m}[\frac{c}{2}(n-1)+h\alpha-\alpha^2]-\frac{c}{4}+\frac{\lambda}{m}\Big)\{X(f)\eta(Y)-Y(f)\eta(X)\}\\
  &+\alpha\Big(AY(f)\eta(X)-AX(f)\eta(Y)\Big)-\frac{3c}{4}g(\phi AX+A\phi X,Y)\nonumber\\
   &+\alpha\Big(X(h)\eta(Y)-Y(h)\eta(X)\Big)-\frac{h-\alpha}{2}cg(\phi X,Y)\nonumber\\
   &-\alpha[X(\alpha)\eta(Y)-Y(\alpha)\eta(X)]-2\alpha g(\phi AX,AY)\\
   &+g(A\phi AX,AY)-g(A\phi AY,AX)=0.\nonumber
\end{align*}

Moreover, applying \eqref{11} in the above formula we have
\begin{align}\label{18}
  &\Big(-\frac{1}{m}[\frac{c}{2}(n-1)+h\alpha-\alpha^2]-\frac{c}{4}+\frac{\lambda}{m}\Big)\{X(f)\eta(Y)-Y(f)\eta(X)\}\\
  &+\alpha\Big(AY(f)\eta(X)-AX(f)\eta(Y)\Big)-\frac{c}{2}g(\phi AX+A\phi X,Y)\nonumber\\
   &+\alpha\Big(X(h)\eta(Y)-Y(h)\eta(X)\Big)-\frac{h-\alpha}{2}cg(\phi X,Y)\nonumber\\
   &-\frac{\alpha}{2}[X(\alpha)\eta(Y)-Y(\alpha)\eta(X)]-\alpha g(\phi AX,AY)\nonumber\\
   &+g(-\frac{1}{2}\eta(X)\nabla\alpha+\frac{1}{2}\alpha(A\phi)X,AY)\nonumber\\
   &-g(-\frac{1}{2}\eta(Y)\nabla\alpha+\frac{1}{2}\alpha(A\phi)Y,AX)=0.\nonumber
\end{align}
Replacing $X$ and $Y$ by $\phi X$ and $\phi Y$ respectively and using \eqref{11} again yields \eqref{13}.
\end{proof}

\section{Proof of Theorem \ref{T1}}
In this section we assume $c\neq0$. Let $M^{2n-1}$ be a Hopf hypersurface of $\widetilde{M}^n(c)$, i.e. $A\xi=\alpha\xi$, then $\alpha$ is constant due to \cite[Theorem 2.1]{NR}. We first consider $\alpha=0$, i.e. $A\xi=0$, then Equation \eqref{13} implies
\begin{equation}\label{19}
  A\phi +\phi A +h\phi =0.
\end{equation}

Let $X\in\mathfrak{D}$ be a principle vector field  corresponding to principle curvature $\mu$, then from \eqref{19} we know that $\phi X$ is also
a principle vector field with principle curvature $(-h-\mu)$. Thus we see that the mean curvature $h$ must be zero, i.e. $A\phi+\phi A=0$, which implies $c=0$ by the result of \cite{KS}. Hence we obtain the following:
\begin{proposition}\label{P1}
An Hopf hypersurface of $\widetilde{M}^n(c),c\neq0$ with $A\xi=0$  does not admit a quasi-Einstein metric.
\end{proposition}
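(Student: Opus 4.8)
The plan is to argue by contradiction, using the structural identity \eqref{13} that every quasi-Einstein Hopf hypersurface of a complex space form must satisfy. Here the hypothesis is $A\xi=0$, i.e. $\alpha\equiv 0$, so I would first substitute $\alpha=0$ into \eqref{13}: the left-hand side and every term carrying a factor of $\alpha$ vanish, and what remains is $c\bigl(A\phi+\phi A\bigr)+hc\,\phi=0$. Dividing by $c\neq 0$ gives the pointwise operator relation \eqref{19}, namely $A\phi+\phi A+h\phi=0$. Thus the whole statement reduces to showing that \eqref{19} is incompatible with $c\neq 0$.

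The second step is to read off the eigenvalue structure of the shape operator on $\mathfrak{D}=\xi^{\perp}$. Since $A\xi=0$, the distribution $\mathfrak{D}$ is $A$-invariant; it is also $\phi$-invariant, with $\phi^{2}=-Id$ on it by \eqref{2}, so $\phi$ restricts to a metric-preserving bijection of $\mathfrak{D}$. For a unit principal vector $X\in\mathfrak{D}$ with $AX=\mu X$, applying \eqref{19} to $X$ gives $A\phi X=-(h+\mu)\phi X$, so $\phi$ carries the eigenspace $V_{\mu}$ bijectively onto $V_{-(h+\mu)}$ inside $\mathfrak{D}$. To pin down $h$ I would proceed in one of two ways: (i) sum the eigenvalues of $A|_{\mathfrak{D}}$ over the unordered pairs $\{\mu,-h-\mu\}$, together with the (possibly trivial) $\phi$-invariant eigenspace $V_{-h/2}$, and compare with $\dim\mathfrak{D}=2n-2$, which forces $2n\,h=0$; or (ii), more cleanly, compose \eqref{19} with $\phi$ and restrict to $\mathfrak{D}$ to obtain $\phi A\phi=A+h\,Id$ there, then take traces over $\mathfrak{D}$: because $\{\phi e_{i}\}$ is again an orthonormal frame of $\mathfrak{D}$ and $A\xi=0$, the left side has trace $-h$ while the right side has trace $(2n-1)h$, so $h=0$.

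Finally, with $h=0$ the identity \eqref{19} reduces to $A\phi+\phi A=0$, i.e. the shape operator anticommutes with the structure tensor $\phi$. A real hypersurface of a non-flat complex space form cannot satisfy this --- equivalently $A\phi+\phi A=0$ forces $c=0$ --- by the result of \cite{KS}. This contradicts $c\neq 0$, and the proposition follows.

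The analytic content of the argument --- deriving \eqref{13} --- was already established in Section~2, so the one genuinely new step is the passage from \eqref{19} to $h=0$, and that is where I would take the most care: in route (i) one has to organise the eigenspace decomposition of $\mathfrak{D}$ compatibly with the injectivity of $\phi$ before counting multiplicities, while route (ii) relies on the fact that pushing an orthonormal frame of $\mathfrak{D}$ through $\phi$ produces another orthonormal frame. Once $h=0$ is in hand the conclusion is immediate, modulo invoking the non-existence result of \cite{KS} for hypersurfaces with $A\phi+\phi A=0$ in the correct form.
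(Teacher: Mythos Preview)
Your proposal is correct and follows the same route as the paper: set $\alpha=0$ in \eqref{13} to obtain \eqref{19}, deduce from the eigenvalue pairing $\mu\leftrightarrow -(h+\mu)$ that $h=0$, and then invoke \cite{KS} to force $c=0$. The paper asserts the step ``$h=0$'' without justification, whereas you supply it explicitly; your trace argument (route~(ii)) is clean and correct, yielding $-h=(2n-1)h$. (In route~(i) the pairing actually gives $h=-(n-1)h$, i.e.\ $nh=0$ rather than $2nh=0$, but of course the conclusion is the same.)
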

Next we consider the case where $\alpha\neq0$.
If $A$ has only one principle curvature $\frac{\alpha}{2}$ in $\mathfrak{D}$, the mean curvature $h=n\alpha$ is constant. From \eqref{18} we can obtain
\begin{align*}
  &\Big(-\frac{1}{m}\Big[\frac{c}{2}(n-1)+h\alpha-\alpha^2\Big]-\frac{c}{4}+\frac{\lambda}{m}\Big)\{X(f)\eta(Y)-Y(f)\eta(X)\}\\
  &+\alpha\Big(AY(f)\eta(X)-AX(f)\eta(Y)\Big)-\frac{c}{2}g(\phi AX+A\phi X,Y)\nonumber\\
   &-\frac{n-1}{2}\alpha cg(\phi X,Y)=0.\nonumber
\end{align*}
Letting $X\in\mathfrak{D}$  such that $AX=\frac{\alpha}{2}X$ and taking $Y=\phi X$, we arrive at $nc=0$. It is impossible.

Now choose $X\in\mathfrak{D}$ such that $AX=\mu X$ with $\mu\neq\frac{\alpha}{2}$, so from \eqref{13} we have
\begin{align}\label{28}
  \alpha(\mu^2+\widetilde{\mu}^2)=(\alpha^2+c)(\mu+\widetilde{\mu})+(h-\frac{\alpha}{2})c.
\end{align}
Here we have used $A\phi X=\widetilde{\mu}\phi X$ with $\widetilde{\mu}=\frac{\mu\alpha+\frac{c}{2}}{2\mu-\alpha}$ followed from \eqref{12}.

Moreover, inserting $\widetilde{\mu}=\frac{\mu\alpha+\frac{c}{2}}{2\mu-\alpha}$ into the equation \eqref{28}, we have
\begin{align}\label{29}
  &4\alpha \mu^4-4(c+2\alpha^2)\mu^3+(4\alpha c+4\alpha^3-4hc)\mu^2\\
  &+(4hc\alpha-2\alpha^2c-c^2)\mu+\frac{3}{4}\alpha c^2+\alpha^3c-hc\alpha^2=0.\nonumber
\end{align}

Now we denote the roots of the polynomial by $f_1,f_2,f_3,f_4$, then from the relation between the roots and coefficients we obtain
\begin{align}\label{30}
\left\{
  \begin{array}{ll}
    &f_1+f_2+f_3+f_4=\frac{c+2\alpha^2}{\alpha },  \\
    &f_1f_2+f_1f_3+f_1f_4+f_2f_3+f_2f_4+f_3f_4 =\frac{\alpha c+\alpha^3-hc}{\alpha},  \\
    &f_1f_2f_3+f_1f_2f_4+f_2f_3f_4 =-\frac{4hc\alpha-2\alpha^2c-c^2}{4\alpha},  \\
    &f_1f_2f_3f_4 =\frac{3c^2+4\alpha^2c-4hc\alpha}{16}.
  \end{array}
\right.
\end{align}
As the proof of \cite[Lemma 4.2]{CK}, we can also get the following lemma.
\begin{lemma}\label{L2}
 The mean curvature $h$ is constant.
\end{lemma}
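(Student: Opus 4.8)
The plan is to push the algebra already obtained in \eqref{28}, \eqref{29} and \eqref{30} to its conclusion, exactly as in \cite[Lemma 4.2]{CK}. Work on the open dense set $\mathcal{U}\subseteq M$ on which the number of distinct principal curvatures is locally constant; there every principal curvature is a smooth function with locally constant multiplicity. Since $\alpha$ is already known to be constant, a non-constant principal curvature can occur only in $\mathfrak{D}$ and is then different from $\alpha/2$. By \eqref{12} the endomorphism $\phi$ carries the $\mu$-eigenspace of $A$ isomorphically onto its $\widetilde\mu$-eigenspace, where $\widetilde\mu=\frac{\mu\alpha+c/2}{2\mu-\alpha}$, so two $\phi$-paired principal curvatures have equal multiplicity and the collection of $\mathfrak{D}$-principal curvatures $\neq\alpha/2$ is invariant under the M\"obius involution $\mu\mapsto\widetilde\mu$. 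A principal curvature fixed by this involution satisfies $\mu^{2}-\alpha\mu-\frac{c}{4}=0$, hence is constant; therefore a non-constant principal curvature, together with its $\phi$-image, forms a genuine $2$-cycle of equal multiplicity, and there are at most two such $2$-cycles because each consists of two distinct roots of \eqref{29}.

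The first step is to isolate the $h$-dependence of \eqref{29}: its $h$-containing terms add up precisely to $-hc(2\mu-\alpha)^{2}$, so (using $c\neq0$ and $2\mu-\alpha\neq0$) every principal curvature $\mu\neq\alpha/2$ in $\mathfrak{D}$ obeys
\[
h=\frac{4\alpha\mu^{4}-4(c+2\alpha^{2})\mu^{3}+(4\alpha c+4\alpha^{3})\mu^{2}-(2\alpha^{2}c+c^{2})\mu+\tfrac34\alpha c^{2}+\alpha^{3}c}{c(2\mu-\alpha)^{2}}=:R(\mu),
\]
a rational function with constant coefficients whose numerator has leading coefficient $4\alpha\neq0$. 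On $\mathcal{U}$ we also have the trace identity $h={\rm trace}(A)=\alpha+\tfrac{\alpha}{2}d_{0}+\sum_{j}d_{j}\mu_{j}$, where $d_{0}\geq0$ is the multiplicity of $\alpha/2$ and $d_{j}$ that of the remaining distinct $\mathfrak{D}$-principal curvature $\mu_{j}$; using $\mu+\widetilde\mu=\frac{2\mu^{2}+c/2}{2\mu-\alpha}$, each $2$-cycle $\{\mu_{j},\widetilde\mu_{j}\}$ contributes the single term $d_{j}\frac{2\mu_{j}^{2}+c/2}{2\mu_{j}-\alpha}$ while the remaining (necessarily constant) curvatures contribute only a constant.

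Now suppose $h$ is non-constant, so $\nabla h\neq0$ on some open subset of $\mathcal{U}$ and hence some $\mathfrak{D}$-principal curvature $\mu$ is non-constant there. If the non-constant curvatures form one $2$-cycle $\{\mu,\widetilde\mu\}$, the trace identity reads $h=\beta+d\frac{2\mu^{2}+c/2}{2\mu-\alpha}$ with $\beta$ a constant; equating this with $h=R(\mu)$ and clearing denominators yields a polynomial equation in $\mu$ whose degree-four coefficient is $4\alpha\neq0$, so $\mu$ assumes only finitely many values and is therefore locally constant, a contradiction. If the non-constant curvatures form two (necessarily distinct) $2$-cycles $\{\mu,\widetilde\mu\}$ and $\{\nu,\widetilde\nu\}$, these four numbers are precisely the roots $f_{1},\dots,f_{4}$ of \eqref{29}, so the first relation in \eqref{30} gives that $(\mu+\widetilde\mu)+(\nu+\widetilde\nu)$ is constant; eliminating $\nu+\widetilde\nu$ from the trace identity $h=\beta+d(\mu+\widetilde\mu)+d'(\nu+\widetilde\nu)$ and equating the result with $h=R(\mu)$ again produces a polynomial in $\mu$ with degree-four coefficient $4\alpha$, and the same contradiction follows. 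It remains to treat the degenerate case $c=-\alpha^{2}$, where \eqref{12} collapses to $\widetilde\mu\equiv\alpha/2$: then \eqref{28} becomes $\mu^{2}=\frac{\alpha^{2}}{4}-\alpha h$ for every $\mathfrak{D}$-principal curvature $\mu\neq\alpha/2$, so at most the two values $\pm\sqrt{\frac{\alpha^{2}}{4}-\alpha h}$ occur; inserting this into the trace identity gives either $h=\alpha+\tfrac{\alpha}{2}d_{0}$ (when the two multiplicities coincide) or, after squaring, a quadratic equation for $h$ with constant coefficients, so $h$ is constant here too. In all cases $\nabla h\equiv0$ on $\mathcal{U}$, and density yields that $h$ is constant on $M$. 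The main labour is exactly the bookkeeping over the few combinatorial patterns the involution can realize, plus the separate treatment of $c=-\alpha^{2}$.
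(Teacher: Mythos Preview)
Your proposal is correct and takes essentially the same approach as the paper. The paper's own proof consists only of the sentence ``As the proof of \cite[Lemma 4.2]{CK}, we can also get the following lemma,'' and what you have done is write out that argument in detail: isolate the $h$-dependence of \eqref{29} (your observation that the $h$-terms combine to $-hc(2\mu-\alpha)^{2}$ is exactly the key step), pair principal curvatures via the M\"obius involution coming from \eqref{12}, use the trace identity together with Vieta's relations \eqref{30}, and conclude that any non-constant principal curvature would satisfy a non-trivial polynomial with constant coefficients. Your explicit separate treatment of the degenerate case $c=-\alpha^{2}$, where the involution collapses, is a welcome point of care.
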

Hence taking $Y=\xi$ in \eqref{18} we conclude
\begin{align}\label{3.10}
  &\theta\{\nabla f-\xi(f)\xi\}+\alpha\Big(\alpha\xi(f)\xi-A\nabla f\Big)=0.
\end{align}
where
\begin{equation*}
\theta:=-\frac{1}{m}\Big[\frac{c}{2}(n-1)+h\alpha-\alpha^2\Big]-\frac{c}{4}+\frac{\lambda}{m}.
\end{equation*}
By taking the inner product of \eqref{3.10} with the principal vector $X\in\mathfrak{D}$, we obtain
\begin{equation*}
(\alpha\mu-\theta)X(f)=0.
\end{equation*}

If $\alpha\mu-\theta\neq0$, then $\nabla f=\xi(f)\xi.$ Differentiating this along any vector field $Z$ gives
\begin{equation}\label{21}
  \nabla_Z\nabla f=Z(\xi(f))\xi+\xi(f)\phi AZ.
\end{equation}
Since $d^2f=0$, i.e. $g(\nabla_Z\nabla f,W)=g(\nabla_W\nabla f,Z)$ for any vector fields $Z,W$, it follows from \eqref{21} that
\begin{equation*}
  g(Z(\xi(f))\xi+\xi(f)\phi AZ,W)=g(W(\xi(f))\xi+\xi(f)\phi AW,Z).
\end{equation*}
Replacing $Z$ and $W$ by $\phi Z$ and $\phi W$ respectively implies
\begin{equation}\label{3.7*}
 \xi(f)(\phi A Z+A\phi Z)=0.
\end{equation}
This implies $\xi(f)=0$ since $\phi A+A\phi =0$ will yield $c=0$ (\cite{KS}). Thus $f$ is constant and $M$ is Einstein, which is impossible. So $\alpha\mu-\theta=0$, i.e. $M$ has at most two distinct constant principal curvatures $\alpha,\mu=\frac{\theta}{\alpha}$.
This shows that the scalar curvature $S$ is constant.

Using \eqref{3.10} we derive from \eqref{9} that
\begin{align*}
 Q(\nabla f)=&\frac{c}{4}\{(2n+1)\nabla f-3\xi(f)\xi\}+hA\nabla f-A^2\nabla f\\
 =&\frac{c}{4}\{(2n+1)\nabla f-3\xi(f)\xi\}+h(\alpha-\mu)\xi(f)\xi+h\mu\nabla f\nonumber\\
 &-A\Big((\alpha-\mu)\xi(f)\xi+\mu\nabla f\Big)\nonumber\\
 =&\Big(\frac{c}{4}(2n+1)+h\mu-\mu^2\Big)\nabla f-\Big(\alpha^2-\mu^2+\frac{3c}{4}-h(\alpha-\mu)\Big)\xi(f)\xi.
\end{align*}

If $m\neq1$, by \eqref{2.13} we have
\begin{align*}
 &\Big(\alpha^2-\mu^2+\frac{3c}{4}-h(\alpha-\mu)\Big)\xi(f)\xi\\
 =&\Big(\frac{c}{4}(2n+1)+h\mu-\mu^2+\frac{1}{m-1}(S-(2n-2)\lambda)\Big)\nabla f,\nonumber
\end{align*}
which, by taking the inner product with any vector field $X\in\mathfrak{D}$, yields
\begin{align}\label{3.8}
  \frac{c}{4}(2n+1)+h\mu-\mu^2+\frac{1}{m-1}(S-(2n-2)\lambda)=0.
\end{align}
Here we have used $g(X,\nabla f)\neq0$ for some vector field $X\in\mathfrak{D}$. Otherwise, if $g(X,\nabla f)=0$ for all $X\in\mathfrak{D}$, then
$f$ is constant since $\xi(f)=0$, which is impossible as before.

 Since the hypersurface $M$ has two distinct constant principle curvatures: $\alpha$ of multiplicity $1$ and $\mu$ of multiplicity $2n-2$,
it is easy to get that the mean curvature $h=\alpha+(2n-2)\mu$  and the scalar curvature $S=c(n^2-1)+2\alpha(2n-2)\mu+(2n-2)(2n-3)\mu^2.$

Furthermore, since $A$ has only one eigenvalue $\mu=\frac{\theta}{\alpha}$ in $\mathfrak{D}$, we see from \eqref{12} that
\begin{equation}\label{31}
2\mu^2-2\alpha \mu-\frac{c}{2}=0.
\end{equation}
 By \eqref{31}, the scalar curvature $S$ may be written as
\begin{equation}\label{3.10**}
  S=(n-1)\Big(\frac{c}{2}(4n-1)+2(2n-1)\alpha\mu\Big).
\end{equation}
Using \eqref{31} again and $h=\alpha+(2n-2)\mu$, we thus have
\begin{align*}
  \theta=&-\frac{1}{m}[\frac{c}{2}(n-1)+h\alpha-\alpha^2]-\frac{c}{4}+\frac{\lambda}{m}\\
=&-\frac{n-1}{m}\Big(\frac{c}{2}+2\mu\alpha\Big)-\frac{c}{4}+\frac{\lambda}{m}.
\end{align*}
Since $\mu-\frac{\theta}{\alpha}=0$, we obtain
\begin{equation}\label{32}
  \lambda=(m+2n-2)\mu\alpha+\frac{n-1}{2}c+\frac{mc}{4}.
\end{equation}
Inserting \eqref{3.10**} and \eqref{32} into \eqref{3.8}, we derive from \eqref{31}
\begin{align*}
0=&\frac{c}{4}(4n-2)+(2n-2)\alpha\mu+\frac{n-1}{m-1}\Big(\frac{c}{2}(4n-1)+2(2n-1)\alpha\mu-2\lambda\Big)\\
=&\frac{c}{2}(2n-1)+(2n-2)\alpha\mu+\frac{n-1}{m-1}\Big(\frac{c}{2}(2n+1)-2(m-1)\mu\alpha-\frac{mc}{2}\Big)\\
  =&\frac{nc}{2}\Big(1+\frac{2n-2}{m-1}\Big),
\end{align*}
which leads to $nc=0.$ The contradiction implies $m=1$.

Since the scalar curvature is constant, by \eqref{2.13} we get $S=(2n-2)\lambda$.  Because \eqref{3.10**} and \eqref{32} still hold for $m=1$, if $S=(2n-2)\lambda$ we obtain
\begin{equation*}
  (n-1)\Big(\frac{c}{2}(4n-1)+2(2n-1)\alpha\mu\Big)=(2n-2)\Big((2n-1)\mu\alpha+\frac{2n-1}{4}c\Big).
\end{equation*}
This also yields  $nc=0$.

Summarizing the above discussion,  we thus assert the following:
\begin{proposition}\label{P2}
A hypersurface with $A\xi=\alpha\xi,\alpha\neq0$ in  $\widetilde{M}^n(c)$ does not admit a quasi-Einstein metric.
\end{proposition}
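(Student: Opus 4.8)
The plan is to run the argument entirely off the structural identity \eqref{13} for quasi-Einstein Hopf hypersurfaces, combined with the fact that on a Hopf hypersurface of $\widetilde{M}^n(c)$ with $c\neq 0$ the function $\alpha=\eta(A\xi)$ is constant (\cite[Theorem 2.1]{NR}); since Proposition \ref{P1} already handles $\alpha=0$, I assume $\alpha\neq 0$. First I would dispose of the degenerate case in which $A$ has the single eigenvalue $\alpha/2$ on $\mathfrak D$: then $h=n\alpha$ is constant, and substituting into \eqref{18} and evaluating on $X\in\mathfrak D$ with $AX=\tfrac{\alpha}{2}X$, $Y=\phi X$, everything collapses to $nc=0$, a contradiction (this also kills the horosphere of $\mathbb C H^n$). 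Otherwise there is $X\in\mathfrak D$ with $AX=\mu X$, $\mu\neq\alpha/2$, and \eqref{12} forces $A\phi X=\widetilde\mu\,\phi X$ with $\widetilde\mu=(\mu\alpha+\tfrac c2)/(2\mu-\alpha)$; feeding this into \eqref{13} gives the quartic \eqref{29} in $\mu$ whose coefficients involve only $\alpha,c,h$, so $\mathfrak D$ carries at most four distinct principal curvatures, all roots of one polynomial with the symmetric relations \eqref{30}.

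The step I expect to be the main obstacle is showing that the mean curvature $h$ is constant (Lemma \ref{L2}), which I would prove by imitating \cite[Lemma 4.2]{CK}: differentiate the symmetric-function relations \eqref{30}, use $d\alpha=0$ together with the Codazzi equation \eqref{8} to control the derivatives of the principal curvatures along $\mathfrak D$ and along $\xi$, and extract that $\nabla h$ has no component in any principal direction, hence is zero locally.

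With $h$ constant, setting $Y=\xi$ in \eqref{18} produces the gradient identity \eqref{3.10}; pairing it with a principal vector $X\in\mathfrak D$ yields $(\alpha\mu-\theta)X(f)=0$. If $\alpha\mu\neq\theta$ for the relevant $\mu$, then $\nabla f=\xi(f)\xi$, and differentiating this and using $d^2f=0$ forces $\xi(f)(\phi A+A\phi)=0$; since $\phi A+A\phi=0$ would give $c=0$ by \cite{KS}, we get $\xi(f)=0$, so $f$ is constant and $M$ would be Einstein, which is impossible in $\widetilde{M}^n(c)$. Hence $\alpha\mu=\theta$, so $\mathfrak D$ has the single constant eigenvalue $\mu=\theta/\alpha$ and $M$ has exactly two constant principal curvatures, $\alpha$ of multiplicity $1$ and $\mu$ of multiplicity $2n-2$; in particular the scalar curvature $S$ is constant.

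Finally I would exploit constancy of $S$. Using \eqref{9} and \eqref{3.10} to compute $Q(\nabla f)$ and inserting it into \eqref{2.13}: for $m\neq 1$, pairing with some $X\in\mathfrak D$ for which $g(X,\nabla f)\neq 0$ (otherwise $f$ would be constant, again impossible) gives \eqref{3.8}; combining this with $h=\alpha+(2n-2)\mu$, the closed form \eqref{3.10**} for $S$, the relation \eqref{31} that follows from $A$ having a single eigenvalue on $\mathfrak D$ via \eqref{12}, and the expression \eqref{32} for $\lambda$, the whole computation reduces to $\tfrac{nc}{2}\bigl(1+\tfrac{2n-2}{m-1}\bigr)=0$, i.e. $nc=0$, a contradiction, so $m=1$. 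But for $m=1$, \eqref{2.13} forces $S=(2n-2)\lambda$, and plugging \eqref{3.10**} and \eqref{32} into this once more yields $nc=0$. Since $c\neq0$, every branch ends in a contradiction, proving that no quasi-Einstein metric exists. Besides the constancy of $h$, the delicate part is keeping the quartic and symmetric-function bookkeeping consistent across the subcases.
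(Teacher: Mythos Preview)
Your proposal is correct and follows essentially the same route as the paper: dispose of the $\mu=\alpha/2$ case via \eqref{18}, derive the quartic \eqref{29} and the symmetric relations \eqref{30} to feed into Lemma~\ref{L2}, then use \eqref{3.10} together with the $\phi A+A\phi\neq 0$ argument of \cite{KS} to reduce to two constant principal curvatures, and finish by plugging \eqref{31}, \eqref{3.10**}, \eqref{32} into \eqref{2.13} (splitting on $m=1$ versus $m\neq 1$) to force $nc=0$. The steps, the equations invoked, and the order of the case analysis all coincide with the paper's own proof.
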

Together Proposition \ref{P1} with Proposition \ref{P2}, we complete the proof of Theorem \ref{T1}.

\section{Proof of Theorem \ref{T2}}
In this section we study a class of non-Hopf hypersurfaces with quasi-Einstein metric of non-flat complex space forms.
Let $\gamma:I\rightarrow\widetilde{M}^n(c)$ be any regular curve. For $t\in I$, let $\widetilde{M}^n_{(t)}(c)$ be a totally geodesic complex hypersurface
through the point $\gamma(t)$ which is orthogonal to the holomorphic plane spanned by $\gamma'(t)$ and $J\gamma'(t)$.
Write $M=\{\widetilde{M}^n_{(t)}(c):t\in I\}$. Such a construction asserts that $M$ is a real hypersurface of $\widetilde{M}^n(c)$, which is called a \emph{ruled hypersurface}. It is well-known that the shape operator $A$ of $M$ is written as:
\begin{equation}\label{4.1}
\begin{aligned}
A\xi=&\alpha\xi+\beta W\;(\beta\neq0), \\
AW = &\beta\xi,\\
AZ = &0\; \text{for any}\; Z\bot\xi,W,
\end{aligned}
\end{equation}
where $W$ is a unit vector field orthogonal to $\xi$, and $\alpha,\beta$ are differentiable functions on $M$. From \eqref{9}, we have
\begin{align}
Q\xi =& (\frac{1}{2}(n-1)c-\beta^2 )\xi,\label{34}\\
QW =& (\frac{1}{4}(2n+1)c-\beta^2 )W,\label{35}\\
QZ =& (\frac{1}{4}(2n+1)c)Z\quad \hbox{for any}\quad Z\bot\xi,W.\label{36}
\end{align}
From these equations we know that the scalar curvature $S=(n^2-1)c-2\beta^2$.

First we assume $n\geq3$ and write
\begin{equation*}
  T_1M=\{X\in TM:\eta(X)=g(X,W)=g(X,\phi W)=0\}.
\end{equation*}
We know that the following relations are valid (see \cite[Eq.(18),(15)]{K}):
\begin{equation*}
  \phi W(\beta)=\beta^2+c/4\quad\hbox{and}\quad X(\beta)=0\quad\hbox{for all}\;X\in T_1(M).
\end{equation*}
On the other hand, the Codazzi
equation \eqref{8} implies that $(\nabla_\xi A)W-(\nabla_WA)\xi = \frac{c}{4}\phi W,$ and using \eqref{4.1} we get
\begin{align*}
  (\nabla_\xi A)W-(\nabla_WA)\xi=&\nabla_\xi(AW)-A\nabla_\xi W-\nabla_W(A\xi)+A\nabla_W\xi\\
  =&(\xi(\beta)-W(\alpha))\xi + \beta^2\phi W- A\nabla_\xi W-W(\beta)W-\beta\nabla_WW,
\end{align*}
which, by taking an inner product with $W$, yields $W(\beta)=0$. Thus we have
\begin{equation}\label{4.4}
\nabla\beta=(\beta^2+c/4)\phi W+\xi(\beta)\xi.
\end{equation}

Furthermore, the following lemma holds:
\begin{lemma}[\cite{K}]\label{L5}
For all $Z\in T_1M$, we have the following relations:
\begin{align*}
\nabla_W\phi W &= \Big(\frac{c}{4\beta}-\beta\Big) W, \quad\nabla_WW =(\beta-\frac{c}{4\beta})\phi W,\\
\nabla_Z\phi W &= \frac{c}{4\beta} Z, \quad\nabla_ZW =-\frac{c}{4\beta}\phi Z,\\
\nabla_{\phi W}W&=0,\quad\nabla_{\phi W} \phi W = 0.
\end{align*}
\end{lemma}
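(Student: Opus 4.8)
The plan is to derive all six identities from a single input, the Codazzi equation \eqref{8}, applied to the three pairs $(\xi,W)$, $(\xi,\phi W)$ and $(\xi,Z)$ with $Z\in T_1M$, and then to read off components in the adapted orthonormal splitting $TM=\mathbb{R}\xi\oplus\mathbb{R}W\oplus\mathbb{R}\phi W\oplus T_1M$. Two elementary facts make this effective. First, by \eqref{4.1} the operator $A$ has image $\operatorname{span}\{\xi,W\}$ and kernel $\mathbb{R}\phi W\oplus T_1M$, so that $AV=\bigl[\alpha g(V,\xi)+\beta g(V,W)\bigr]\xi+\beta g(V,\xi)W$ for every tangent vector $V$; in particular $AV=0$ precisely when $g(V,\xi)=g(V,W)=0$. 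Second, the structure formulas \eqref{6} give here $\nabla_\xi\xi=\beta\phi W$ and $\nabla_W\xi=\nabla_{\phi W}\xi=\nabla_Z\xi=0$ (since $AW=\beta\xi$, $A\phi W=0$, $AZ=0$ all lie in $\mathbb{R}\xi=\ker\phi$), while $(\nabla_X\phi)W=\eta(W)AX-g(AX,W)\xi=-\beta g(X,\xi)\xi$, so that $\nabla_X(\phi W)=\phi\nabla_XW$ whenever $X\perp\xi$. This last identity already reduces the three $\nabla_\bullet(\phi W)$ formulas to the three $\nabla_\bullet W$ formulas, using $\phi^2=-\mathrm{Id}$ on $W$ and on $T_1M$; it also yields $g(\nabla_XW,\xi)=-g(W,\nabla_X\xi)=0$ and $g(\nabla_XW,W)=0$ for $X\perp\xi$, so every such $\nabla_XW$ lands in $\ker A$.

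Now run the three Codazzi identities. The pair $(\xi,W)$ is the computation already displayed just before the lemma; after inserting $W(\beta)=0$ and $A\nabla_\xi W=0$ it reads $\bigl(\xi(\beta)-W(\alpha)\bigr)\xi+\beta^2\phi W-\beta\nabla_WW=\tfrac{c}{4}\phi W$. Its $\phi W$-component gives $g(\nabla_WW,\phi W)=\beta-\tfrac{c}{4\beta}$, its $T_1M$-component forces $\nabla_WW$ to have no $T_1M$-part, and its $\xi$-component yields (as a byproduct) $W(\alpha)=\xi(\beta)$; hence $\nabla_WW=(\beta-\tfrac{c}{4\beta})\phi W$ and $\nabla_W\phi W=\phi\nabla_WW=(\tfrac{c}{4\beta}-\beta)W$. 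For the pair $(\xi,\phi W)$ the right-hand side of \eqref{8} is $\tfrac{c}{4}\phi^2W=-\tfrac{c}{4}W$; expanding the left-hand side with $A\phi W=0$, $\nabla_\xi(\phi W)=\phi\nabla_\xi W$, $\nabla_{\phi W}\xi=0$ and $\phi W(\beta)=\beta^2+c/4$, the $\xi$- and $W$-components carry no information on $\nabla_{\phi W}W$ (the $W$-component becomes a tautology, the $\xi$-component merely expresses $\phi W(\alpha)$), and what survives is $-\beta\nabla_{\phi W}W=0$ in the $\ker A$ directions, i.e. $\nabla_{\phi W}W=0$, whence $\nabla_{\phi W}\phi W=\phi\nabla_{\phi W}W=0$. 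Finally, for $Z\in T_1M$ the pair $(\xi,Z)$ has right-hand side $\tfrac{c}{4}\phi Z$; using $Z(\beta)=0$, $\nabla_Z\xi=0$ and $A\nabla_\xi Z\in\mathbb{R}\xi$ the left-hand side reduces to $\beta g(\nabla_\xi W,Z)\xi-Z(\alpha)\xi-\beta\nabla_ZW$, so comparing the components orthogonal to $\xi$ gives $\nabla_ZW=-\tfrac{c}{4\beta}\phi Z$ (note $\phi Z\in T_1M$), and therefore $\nabla_Z\phi W=\phi\nabla_ZW=\tfrac{c}{4\beta}Z$.

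I expect the only delicate point to be the bookkeeping rather than any genuine analytic difficulty: one must keep straight which part of each Codazzi identity is informative. The $\xi$- and $W$-components only reproduce the auxiliary relations $W(\alpha)=\xi(\beta)$, an expression for $\phi W(\alpha)$, and $Z(\alpha)=\beta g(\nabla_\xi W,Z)$, none of which enters the lemma, whereas it is exactly the components lying in $\ker A=\mathbb{R}\phi W\oplus T_1M$ that isolate the sought covariant derivatives. Once the splitting and the elementary facts — the formula for $AV$, the identity $\nabla_X(\phi W)=\phi\nabla_XW$, and $W(\beta)=Z(\beta)=0$, $\phi W(\beta)=\beta^2+c/4$ — are in place, no Gauss-equation (curvature) input is needed, and the lemma follows by a short component chase; an alternative, more geometric route via the totally geodesic leaves of the integrable distribution $\xi^{\perp}$ (reducing to a hypersurface computation inside $\widetilde{M}^{n-1}(c)$) is possible but heavier. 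Throughout one works with $n\ge 3$, as in the statement, so that $T_1M\ne 0$; when $n=2$ only the first and last rows occur and they follow from the pairs $(\xi,W)$ and $(\xi,\phi W)$ alone.
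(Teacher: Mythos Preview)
The paper does not give its own proof of Lemma~\ref{L5}; it is quoted from Kimura~\cite{K}. Your argument, which extracts all six identities by reading off the $\ker A=\mathbb{R}\phi W\oplus T_1M$ components of the Codazzi equation~\eqref{8} applied to the pairs $(\xi,W)$, $(\xi,\phi W)$, $(\xi,Z)$, is correct and self-contained: the preliminary observations $\nabla_X\xi=0$ for $X\perp\xi$, $\nabla_X(\phi W)=\phi\nabla_XW$ for $X\perp\xi$, $\nabla_XW\in\ker A$ for $X\perp\xi$, and $A\nabla_\xi W=0$ are all valid, and the component chase goes through exactly as you describe. This is essentially the computation Kimura carries out (also via Codazzi), so there is no genuine methodological difference to report.
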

For $Z\in T_1M$, from \eqref{4.4} we know $Z(\beta)=0$,
Putting $Y=\xi$ and $X=Z$ in \eqref{15}, we have
\begin{align}\label{40}
    &(\nabla_\xi Q)Z-(\nabla_ZQ)\xi+\frac{1}{m}\{Z(f)Q\xi-\xi(f)QZ\} \\
 = &\Big(\frac{c}{4}-\frac{\lambda}{m}\Big)\Big(\xi(f) Z-Z(f)\xi\Big)+A\xi(f)AZ-AZ(f)A\xi.\nonumber
\end{align}
Since $Z(\beta)=0$,  we obtain
\begin{equation*}
(\nabla_ZQ)\xi-(\nabla_\xi Q)Z=-\frac{c}{4}(2n+1)\nabla_\xi Z+Q\nabla_\xi Z.
\end{equation*}

By \eqref{34} and \eqref{36}, the inner product of \eqref{40} with $\xi$ gives
\begin{align*}
 Z(f)\Big[\frac{1}{m}\Big(\frac{1}{2}(n-1)c-\beta^2\Big)+\frac{c}{4}-\frac{\lambda}{m}\Big]=0.
\end{align*}
Similarly, putting $X=Z$ and $Y=W$ in \eqref{15}, we obtain
\begin{align*}
 Z(f)\Big[\frac{1}{m}\Big(\frac{1}{4}(2n+1)c-\beta^2\Big)+\frac{c}{4}-\frac{\lambda}{m}\Big]=0.
\end{align*}
The previous two formulas give
\begin{equation*}
 Z(f)=0.
\end{equation*}

Now putting $Y=\xi$ and $X=W$ in \eqref{15} yields
\begin{equation}\label{4.7}
\left\{
  \begin{array}{ll}
    W(f)\Big[\frac{1}{m}\Big(\frac{1}{2}(n-1)c-\beta^2\Big)+\frac{c}{4}-\frac{\lambda}{m}-\beta^2\Big] &=0,  \\
    \xi(f)\Big[\frac{c}{4}-\frac{\lambda}{m}+\frac{1}{m}\Big(\frac{1}{4}(2n+1)c-\beta^2\Big )-\beta^2\Big]&= -\xi(\beta^2).
  \end{array}
\right.
\end{equation}
 Here we have used \eqref{4.4} and $g(\nabla_\xi W,W)=g(\nabla_\xi W,\xi)=0$.

{\bf Case I:}$\frac{1}{m}\Big(\frac{1}{2}(n-1)c-\beta^2\Big)+\frac{c}{4}-\frac{\lambda}{m}-\beta^2=0$. Then $\beta$ is constant and $\beta^2=-\frac{c}{4}$ by \eqref{4.4}. Then
\begin{equation}\label{4.8}
\lambda=\frac{1}{4}(2n-1+2m)c .
\end{equation}
Moreover, from \eqref{4.7} we have
\begin{equation*}
\xi(f)=0.
\end{equation*}
Thus we may write
\begin{equation*}
  \nabla f=W(f)W+\phi W(f)\phi W.
\end{equation*}

For $m\neq1$, since $S=(n^2-\frac{1}{2})c$ is constant, it follows from \eqref{2.13} that
\begin{align*}
  &W(f)[\frac{1}{4}(2n+1)c-\beta^2]W+\phi W(f)[\frac{1}{4}(2n+1)c]\phi W\\
=&-\frac{1}{m-1}(S-(2n-2)\lambda)(W(f)W+\phi W(f)\phi W).
\end{align*}
By the orthogonality of $\phi W$ and $W$, we obtain
\begin{equation*}
\left\{
  \begin{array}{ll}
    W(f)\Big[\frac{1}{4}(2n+1)c-\beta^2+\frac{1}{m-1}\Big(S-(2n-2)\lambda\Big)\Big] &=0,  \\
   \phi W(f)\Big[\frac{1}{4}(2n+1)c+\frac{1}{m-1}\Big(S-(2n-2)\lambda\Big)\Big]&= 0.
  \end{array}
\right.
\end{equation*}
Because $m>1$, by \eqref{4.8} a direct computation implies
\begin{equation*}
  W(f)=\phi W(f)=0.
\end{equation*}

For $m=1$, it follows from \eqref{2.13} that $\nabla f=0$ or $S=(2n-2)\lambda$, i.e.
\begin{equation*}
(n^2-\frac{1}{2})c=\frac{1}{2}(n-1)(2n+1)c.
\end{equation*}
This is impossible since $M$ does not be an Einstein hypersurface as in introduction.

{\bf Case II:}$\frac{1}{m}\Big(\frac{1}{2}(n-1)c-\beta^2\Big)+\frac{c}{4}-\frac{\lambda}{m}-\beta^2\neq0$. Thus $W(f)=0$ by \eqref{4.7}.
Now letting $X=\xi$ and $Y=\phi W$ in \eqref{15} gives
\begin{equation}\label{4.8}
  \xi(f)\Big[\frac{c}{4}-\frac{\lambda}{m}+\frac{1}{4m}(2n+1)c\Big]=0
\end{equation}
and
\begin{align}\label{4.11*}
& \phi W(f)\Big[\frac{1}{m}\Big(\frac{1}{2}(n-1)c-\beta^2\Big)+\frac{c}{4}-\frac{\lambda}{m}\Big]\\
 &+\phi W(\beta^2)-\frac{1}{4}(2n+1)c\beta+\Big(\frac{1}{2}(n-1)c-\beta^2\Big)\beta=0.\nonumber
\end{align}
Meanwhile, taking $X=\phi W$ and $Y=W$ in \eqref{15} and applying Lemma \ref{L5}, we obtain
\begin{align}\label{4.12*}
 \phi W(f)\Big[\frac{1}{m}\Big(\frac{1}{4}(2n+1)c-\beta^2\Big)+c-\frac{\lambda}{m}\Big]+\beta^2\Big(\frac{c}{4\beta}-\beta\Big)+\phi W(\beta^2)=0.
\end{align}
Comparing \eqref{4.11*} with \eqref{4.12*} gives
\begin{align*}
 \phi W(f)(3m+3)+4m\beta=0.
\end{align*}

On the other hand, by using \eqref{4.8}, we follow from Equation \eqref{4.7} that
\begin{equation*}
\xi(f)=\frac{\xi(\beta^2)}{1+\frac{1}{m}}.
\end{equation*}
This means that
\begin{equation*}
  \nabla f=-\frac{4m\beta}{3(m+1)}\phi W+\frac{\xi(\beta^2)}{1+\frac{1}{m}}\xi,
\end{equation*}
hence for any $X,Y\in TM$,
\begin{align*}
  {\rm Hess} f(X,Y)=&g(\nabla_X\nabla f,Y)\\
  =&-\frac{4m}{3(m+1)}\Big[X(\beta)g(Y,\phi W)+\beta g(\nabla_X\phi W,Y)\Big]\\
  &+\frac{m}{m+1}\Big[X(\xi(\beta^2))\eta(Y)+\xi(\beta^2) g(\phi AX,Y)\Big].
\end{align*}
By Lemma \ref{L5} and \eqref{4.4}, we compute
\begin{equation}\label{4.11}
\left\{
  \begin{array}{ll}
     {\rm Hess} f(W,W)&=-\frac{4m}{3(m+1)}\Big(\frac{c}{4}-\beta^2\Big),\\
   {\rm Hess} f(\phi W,\phi W)&=-\frac{4m}{3(m+1)}\Big(\beta^2+\frac{c}{4}\Big).
  \end{array}
\right.
\end{equation}

On the other hand, using \eqref{35} and \eqref{36}, it follows from Equation \eqref{1} that
\begin{equation}\label{4.12}
\left\{
  \begin{array}{ll}
    {\rm Hess} f(W,W)&=\lambda-\Big(\frac{1}{4}(2n+1)c-\beta^2 \Big),\\
   {\rm Hess} f(\phi W,\phi W)&=\lambda+\frac{1}{m}\frac{(4m\beta)^2}{[3(m+1)]^2}-\frac{1}{4}(2n+1)c.
  \end{array}
\right.
\end{equation}
Combining \eqref{4.11} with \eqref{4.12}, we obtain

\begin{equation*}
15m^2+22m-9=0.
\end{equation*}
This equation has no solution for $m\geq1$.

For the case $n=2$, it is obvious that these equations including from \eqref{4.7} to \eqref{4.12} still hold,  we thus complete the proof of Theorem \ref{T2}.

\section{Proofs of Theorem \ref{T3} and Corollary \ref{T4}}

In this section we assume $c=0$. Namely $\widetilde{M}^n$ is a complex Euclidean space $\mathbb{C}^n$.

{\it Proof of Theorem \ref{T3}.} For a contact hypersurface, by \cite[Lemma 3.1]{CK}, we know that $M$ is Hopf and $\alpha=\eta(A\xi)$ is constant. Therefore we find that Equation \eqref{29} holds and can be simply as
\begin{align}\label{3.11*}
  &\alpha\mu^2(\mu-\alpha)^2=0.
\end{align}
This shows that $\mu$ is also constant, and further the scalar curvature is constant.
For $c=0$, Equation \eqref{3.10} becomes
\begin{align}\label{3.11}
  &\frac{\lambda-h\alpha+\alpha^2}{m}\{\nabla f-\xi(f)\xi\}+\alpha\Big(\alpha\xi(f)\xi-A\nabla f\Big)=0.
\end{align}
Taking an inner product of \eqref{3.11} with $X\in\mathfrak{D}$, then
\begin{align}\label{5.3}
  &\Big(\frac{\lambda-h\alpha+\alpha^2}{m}-\alpha \mu\Big)X(f)=0.
\end{align}
Next we decompose two cases.

{\bf Case I:} $\lambda\neq m\alpha\mu+h\alpha-\alpha^2$. We find $\nabla f=\xi(f)\xi$ by \eqref{5.3}. Then $M$ is a sphere as the proof of \cite[Theorem 3.2]{CK}.

{\bf Case II:} $\lambda=m\alpha\mu+h\alpha-\alpha^2$. If $\alpha=0$ then $\mu\neq0$, otherwise $M$ is totally geodesic, which is impossible.  In this case $M$ is a generalized cylinder $\mathbb{R}^{n}\times\mathbb{S}^{n-1}$. Next we assume $\alpha\neq0$, then $\mu=0$ or $\mu=\alpha$ by \eqref{3.11*}. If $\mu=0$, $M$ is $\mathbb{R}^{2n-1}$, which fails to be a contact hypersurface. Thus $\mu=\alpha$, $M$ is a totally
umbilical hypersurface.  Consequently it is a portion of a $(2n-1)$-dimensional sphere. Moreover, since $\lambda=(m+2n-2)\alpha^2>0$, $M$ is compact (see \cite{Q}).\qed\bigskip

{\it Proof of Corollary \ref{T4}.} If $A\xi=0$,  Formula \eqref{3.11} becomes
\begin{equation*}
 \frac{\lambda}{m}\Big(\xi(f)\xi-\nabla f\Big)=0.
\end{equation*}

When $\lambda\neq0$, we have $\nabla f=\xi(f)\xi$. Thus Equation \eqref{3.7*} holds.
By \eqref{11}, $A\phi A=0$ then we get $\xi(f)A^2\phi Z=0$. Moreover, $\xi(f)A^2Z=0$ for any $Z\in TM$ then either $A=0$ or $\xi(f)=0$. If $A=0$ then $Q=0$, then from \eqref{3.10*} we find $\lambda=0$, which is a contradictory to the assumption.
Thus $\xi(f)=0$, i.e. $f$ is constant. That means that $M$ is Einstein and the scalar curvature $S=(2n-1)\lambda$ by quasi-Einstein equation \eqref{1}.
We complete the proof by \cite[Theorem 7.3]{F}.\qed

\end{document}